\newtheorem{thm}{Theorem}[section]
\newtheorem{cor}[thm]{Corollary}
\newtheorem{lem}[thm]{Lemma}
\newtheorem{prop}[thm]{Proposition}
\theoremstyle{definition}
\newtheorem{defn}[thm]{Definition}
\theoremstyle{remark}
\newtheorem{rem}[thm]{Remark}
\theoremstyle{example}
\newtheorem{exa}[thm]{Example}
\theoremstyle{conjecture}
\numberwithin{equation}{section}
\newcommand{\eps}{\varepsilon}
\newcommand{\del}{\delta}
\def\dlim{\displaystyle\lim}
\newcommand{\B}{{\mathbb B}}
\newcommand{\D}{{\mathbb D}}
\newcommand{\C}{{\mathbb C}}
\newcommand{\N}{{\mathbb N}}
\newcommand{\calO}{{\mathcal O}}
\newcommand{\calU}{{\mathcal U}}
\newcommand{\doublewidetilde}[1]{{%
  \mathpalette\double@widetilde{#1}%
}}
\newcommand{\double@widetilde}[2]{%
  \sbox\z@{$\m@th#1\widetilde{#2}$}%
  \ht\z@=.9\ht\z@
  \widetilde{\box\z@}%
}
\begin{document}

\title[ ]{Sets of uniqueness, weakly sufficient sets and sampling sets for weighted spaces of holomorphic functions in the unit ball}%

\author{Bingyang Hu$^{1}$, Le Hai Khoi$^{2}$}

\address{Bingyang Hu: Department of Mathematics, University of Wisconsin, Madison, WI 53706-1388, USA.}%
\email{bhu32@wisc.edu}

\address{Le Hai Khoi: Division of Mathematical Sciences, School of Physical and Mathematical Sciences, Nanyang Technological University (NTU), 637371 Singapore, Singapore}%
\email{lhkhoi@ntu.edu.sg}

\date{\today}%

\thanks{$^{1}$ Supported in part NSF grant DMS 1600458 and NSF grant 1500182.}
\thanks{$^{2}$ Supported in part by MOE's AcRF Tier 1 grant M4011724.110 (RG128/16)}

\maketitle

\begin{abstract}
We consider inductive limits of weighted spaces of holomorphic functions in the unit ball of $\C^n$. The relationship between sets of uniqueness, weakly sufficient sets and sampling sets in these spaces is studied. In particular, the equivalence of these sets, under general conditions of the weights, is obtained.
\end{abstract}

\section{Introduction}

Let $\B$ be the unit ball in $\C^n$ and $\calO(\B)$ be the collection of holomorphic functions on $\B$ with the usual compact-open topology. A function $\varphi$ defined on $[0, 1)$ is said to be a \emph{weight} if it takes the values in $(1, +\infty)$. We define the associated weighted space $A_\varphi$ as
\[
A_\varphi:=\left\{ f \in \calO(\B): \|f\|_\varphi:=\sup_{z \in \B} \frac{|f(z)|}{\varphi(|z|)}<+\infty \right\}.
\]
In this paper, we are interested in the case where the function space is defined by the inner inductive limit of $A_\varphi$'s. More precisely, let $\Phi=\left( \varphi_p \right)_{p=1}^\infty$ denote an increasing sequence of weights. For simplicity, we use $\|f\|_p$ instead of $\|f\|_{\varphi_p}$, and $A_\varphi^{-p}$ instead of $A_{\varphi_p}$. Then clearly $A_{\varphi}^{-p} \hookrightarrow A_\varphi^{-(p+1)}$ (here $\hookrightarrow$ means a continuous embedding), and hence, we let
\[
A^{-\infty}_\Phi:= \bigcup_{p \ge 1} A_{\varphi}^{-p}
\]
with the topology induced by the \emph{inner inductive limit} of $A_\varphi^{-p}$, namely
\[
(A^{-\infty}_\Phi, \tau)=\textrm{lim ind} A_{\varphi}^{-p}.
\]
Now let $S$ be a subset set of $\B$. Define
\[
A_{\varphi}^{-p, S}:=\left\{ f \in A_{\Phi}^{-\infty}: \|f\|_{p, S}=\sup_{z \in S} \frac{|f(z)|}{\varphi_p(|z|)}<\infty \right\}.
\]
Notice that the inclusion relations $A_\varphi^{-p} \hookrightarrow A_\varphi^{-p, S}  \hookrightarrow A_\Phi^{-\infty}$ always hold.  Hence, it follows immediately that
\[
A_\Phi^{-\infty}=\bigcup_{p \ge 1} A_{\varphi}^{-p} \subseteq \bigcup_{p \ge 1} A_\varphi^{-p, S} \subseteq A_{\Phi}^{-\infty},
\]
which implies
\[
A_\Phi^{-\infty}=\bigcup_{p \ge 1} A_\varphi^{-p}=\bigcup_{p \ge 1} A_\varphi^{-p, S}.
\]
Therefore, we can endow $A_\Phi^{-\infty}$ with another weaker inner inductive limit topology of seminormed spaces $A_\varphi^{-p, S}$:
\[
(A_\Phi^{-\infty}, \tau_S):=\textrm{lim ind} A_{\varphi}^{-p, S}.
\]

 We note that this type of spaces appeared as duals to Fr\'echet-Schwartz (FS) spaces and play an important role in the study of representing holomorphic functions in series of simpler functions, such as exponential functions, or rational functions, which have many applications in functional equations and approximations of functions (see, e.g., \cite{AK1, AK2, AKN, Bar, Kor, Sch, Tay} and references therein).

\medskip
We are interested in the following three special cases of $S$.

\begin{defn}
A set $S \subseteq \B$ is said to be \emph{weakly sufficient} for the space $A_\Phi^{-\infty}$ if two topologies $\tau$ and $\tau_S$ are equivalent.
\end{defn}

\begin{defn}
A set $S \subseteq \B$ is called \emph{a set of uniqueness} for $A_\Phi^{-\infty}$ if $f \in A_\Phi^{-\infty}$ and $f(z)=0$ for all $z \in S$ imply that $f=0$.
\end{defn}

Furthermore, let a continuous function $\psi: [0, 1)\to (0, \infty)$ satisfy $\dlim_{r \to 1} \psi(r)=\infty$. For $f \in A_\Phi^{-\infty}$, we put
\[
T_{f, \psi}=\limsup_{|z| \to 1} \frac{\log |f(z)|}{\psi(|z|)},
\]
and for $S \subset \B$,
\[
T_{f, \psi, S}=\limsup_{|z| \to 1, z \in S} \frac{\log |f(z)|}{\psi(|z|)}.
\]

\begin{defn}
A set $S \subseteq \B$ is called \emph{a $\psi$-sampling set} for $A_\Phi^{-\infty}$, if $T_{f, \psi}=T_{f, \psi, S}$ for every $f \in A_{\Phi}^{-\infty}$.
\end{defn}

A \emph{classical example} of $\Phi$ and $\psi$ is
\[
\Phi=\left\{\varphi_p(r)=(1-r)^{-p}, p\in\N\right\}, \ \psi(r)=|\log(1-r)|.
\]
Such a choice of $\Phi$ and $\psi$ is motivated by the easy fact that $\textrm{dist}(z, \partial \B)=1-|z|, z \in \B$. Moreover, in this case, the space $A_\Phi^{-\infty}$ becomes the well-known function space $A^{-\infty}$, which consists of all the holomorphic functions on $\B$ with polynomial growth.

The property of weakly sufficient sets, sets of uniqueness and sampling sets for $A^{-\infty}$ was carefully studied in \cite{KT} for the case $n=1$ and in \cite{AA1, LHK} for the higher dimension case. More precisely, the the following results have been obtained in these papers.

\begin{thm}
Let $S$ be a subset of the unit disc $\B$ (respectively, $\D$). Consider the following assertions
\begin{enumerate}
\item[(i)] $S$ is classical sampling for $A^{-\infty}(\B)$ (respectively, $\D$);
\item[(ii)] $S$ is weakly sufficient for $A^{-\infty}(\B)$ (respectively, $\D$);
\item[(iii)] $S$ is a set of uniqueness for $A^{-\infty}(\B)$ (respectively, $\D$).
\end{enumerate}
Then $(i) \implies (ii)  \implies (iii)$.
\end{thm}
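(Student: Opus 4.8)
I would prove the two implications separately, since they rest on rather different ideas; the argument is identical for $\B$ and for $\D$, so I write $\B$ throughout. First recall the standard description of a locally convex inductive limit $\textrm{lim ind}\,(E_p,q_p)$ of a scale of (semi)normed spaces: a base of neighborhoods of $0$ consists of the absolutely convex hulls $\Gamma\big(\bigcup_{p\ge1}\varepsilon_pB_p\big)$, where $B_p=\{q_p\le 1\}$ and $(\varepsilon_p)_{p\ge1}$ runs over all sequences of positive reals. Applying this to $\tau$ (with $B_p$ the unit ball of the Banach space $A_\varphi^{-p}$) and to $\tau_S$ (with $B_p^S$ the unit semiball of $A_\varphi^{-p,S}$), and using $\|f\|_{p,S}\le\|f\|_p$ for every $p$ and $f$, one sees that each inclusion $A_\varphi^{-p}\hookrightarrow A_\varphi^{-p,S}$ is norm-decreasing, so $\tau_S\subseteq\tau$ always; the point in (i)$\Rightarrow$(ii) is the reverse inclusion.

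\textbf{(i)$\Rightarrow$(ii).} Being a classical sampling set is, at bottom, a uniform comparison between the two scales of (semi)norms: for every $p$ there is $C_p>0$ with $\|f\|_p\le C_p\|f\|_{p,S}$ for all $f\in A_\Phi^{-\infty}$. (If the sampling property is instead phrased through the growth functionals — $T_{f,\psi}=T_{f,\psi,S}$ for every $f$, with the classical $\psi$ — then one first has to promote this pointwise identity to the uniform estimate above; this is the only delicate step.) Granting the estimate, $B_p^S\subseteq C_pB_p$ for each $p$. Take any $\tau$-neighborhood of $0$, which we may assume of the form $U=\Gamma\big(\bigcup_p\varepsilon_pB_p\big)$, and put $\delta_p:=\varepsilon_p/C_p>0$. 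Then $\delta_pB_p^S\subseteq\varepsilon_pB_p\subseteq U$ for every $p$, hence $\Gamma\big(\bigcup_p\delta_pB_p^S\big)\subseteq\Gamma(U)=U$. Thus $U$ is also a $\tau_S$-neighborhood of $0$, so $\tau\subseteq\tau_S$, and therefore $\tau=\tau_S$: $S$ is weakly sufficient.

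\textbf{(ii)$\Rightarrow$(iii).} The key observation is that $(A_\Phi^{-\infty},\tau)$ is Hausdorff: the point evaluations $f\mapsto f(z)$, $z\in\B$, are continuous on each $A_\varphi^{-p}$ because $|f(z)|\le\varphi_p(|z|)\,\|f\|_p$, hence continuous on $(A_\Phi^{-\infty},\tau)$, and they separate the points of $A_\Phi^{-\infty}$ since a nonzero holomorphic function does not vanish identically. If $S$ is weakly sufficient, then $\tau_S=\tau$ is Hausdorff as well. Now suppose $f\in A_\Phi^{-\infty}$ vanishes on $S$. Then $\|f\|_{p,S}=0$ for every $p$, so $f\in\delta_pB_p^S$ for all $p$ and all $\delta_p>0$; consequently $f$ lies in every basic $\tau_S$-neighborhood $\Gamma\big(\bigcup_p\delta_pB_p^S\big)$ of $0$, i.e.\ $f$ belongs to the $\tau_S$-closure of $\{0\}$, which is $\{0\}$. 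Hence $f=0$, and $S$ is a set of uniqueness.

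\textbf{Where the work is.} The implication (ii)$\Rightarrow$(iii) is essentially formal once the neighborhood bases are available, and the routing through those bases in (i)$\Rightarrow$(ii) is formal too. The genuine obstacle — the only place where the complex-analytic nature of $A^{-\infty}$ (the Banach structure of the steps, normal families of holomorphic functions) really enters — is to extract the uniform estimate $\|f\|_p\le C_p\|f\|_{p,S}$ from the sampling hypothesis when the latter is given only in its $\limsup$ form.
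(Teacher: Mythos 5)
Your implication (ii)$\Rightarrow$(iii) is fine: routing through the Hausdorffness of $\tau$ and the fact that $\|f\|_{p,S}=0$ places $f$ in every basic $\tau_S$-neighborhood is a correct (if slightly more elaborate than necessary) version of the standard argument; the paper's own route is the one-line consequence of Lemma \ref{lem01}, namely that weak sufficiency gives $\|f\|_m\le C\|f\|_{p,S}=0$ for $f$ vanishing on $S$. The preliminary observation that $\tau_S\subseteq\tau$ always holds is also correct.

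The problem is (i)$\Rightarrow$(ii), where you have a genuine gap rather than a proof. The paper's definition of a sampling set is the $\limsup$ identity $T_{f,\psi}=T_{f,\psi,S}$ for every $f$, with $\psi(r)=|\log(1-r)|$; your argument instead starts from the uniform estimate $\|f\|_p\le C_p\|f\|_{p,S}$, which is not the hypothesis but essentially the conclusion (indeed, by Lemma \ref{lem01} it is a form of weak sufficiency stronger than what is needed, since it keeps the same index $p$). The parenthetical remark that one ``first has to promote this pointwise identity to the uniform estimate'' concedes exactly the entire content of the implication, and moreover the promotion you ask for is generally false as stated: from $f\in A_\varphi^{-p,S}$, i.e.\ $|f(z)|\le C(1-|z|)^{-p}$ on $S$, the sampling property only yields $T_{f,\psi}\le p$, hence $|f(z)|\le C_\eps(1-|z|)^{-p-\eps}$ on all of $\B$, which places $f$ in $A_\varphi^{-(p+1)}$ but not in $A_\varphi^{-p}$; so the comparison must shift the index, $\|f\|_m\le C\|f\|_{p,S}$ with $m>p$. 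Even then, what one obtains directly from the $\limsup$ hypothesis is only the \emph{set inclusion} $A_\varphi^{-p,S}\subset A_\varphi^{-m}$ (plus the uniqueness property from part (iii), proved as in Subsection 3.1(a) via the maximum modulus principle), and converting that inclusion into a \emph{continuous embedding} is the real analytic work: this is precisely the content of Theorem \ref{thm01}, whose proof uses the constructed series $u=\sum 4^{-k}u_k$, Montel's theorem, and the compactness of the inclusion $A_\varphi^{-(q-1)}\hookrightarrow A_\varphi^{-q}$. None of this appears in your sketch; the inductive-limit bookkeeping you do carry out is the formal and easy part. To repair the proof you should follow the scheme of Theorem \ref{thm02}(1): show sampling implies uniqueness and the set inclusion with $m=p+1$, and then invoke (or reprove) Theorem \ref{thm01} to upgrade to continuity.
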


It should be noted that the inverse implications, in general, are not true. In \cite{KT}, two counter-examples are provided to show  a failure of both inverse implications. Nevertheless, in \cite{LHK} it was showed, for the classical $\psi$, that $(iii) \implies (ii)$, with an additional assumption.

A careful study of proofs of the results above led us to the thought that $(ii)$ does not imply $(i)$ due to `` too independent'' growths of $\Phi$ and $\psi$, while $(iii)$ does not imply $(ii)$ because the assumption of $S$ to be a set of uniqueness is not strong enough to ensure an inclusion to become a continuous embedding.

So a question to ask is for what $\Phi$ and $\psi$, we can have
\[
(i) \Longleftrightarrow (ii)  \Longleftrightarrow (iii) \  ?
\]

In the recent paper \cite{AA}, the affirmative answer was given for the case $\varphi_p(r)=e^{p g(r)}, 0<p<\infty$ and $\psi(r)=g(r)$, where $g$ is a so-called \emph{everywhere quasi-canonical weight} (see, \cite[Section 4]{AA}).

The aim of this paper is to solve the question above for a general $\Phi$ and $\psi$, with the ``minimal" assumptions imposed on them.

The structure of this paper is as follows. In Section 2, we show that $(ii) \implies (iii)$ always. Then we introduce conditions $(C_1)-(C_2)$ for $\Phi$ (for which the classical weights are satisfied) and show that with the same additional assumption as for the classical case, $(iii) \implies (ii)$ (Theorem 2.5). The proof follows the scheme in \cite{LHK}. Section 3 deals with the equivalence $(ii) \Longleftrightarrow (i)$.  We introduce conditions $(C_3)-(C_5)$ which ``relate'' growths of $\Phi$ and $\psi$" and, together with $(C_1), (C_2)$, allow us to establish each of implications $(i)\implies (ii)$ and $(ii) \implies (i)$.

\section{Weakly sufficient sets and sets of uniqueness}

The following characterization of weakly sufficient sets is needed in the sequel.

\begin{lem} {\cite[Proposition 2.2]{LHK2}}  \label{lem01}
For the space $A_\Phi^{-\infty}$, the following statements are equivalent:
\begin{enumerate}
\item [(a)] $S$ is weakly sufficient set.
\item [(b)] For any $p \ge 1$, there exists $m=m(p)$, such that
\[
A_\varphi^{-p,S} \hookrightarrow A_\varphi^{-m},
\]
i.e., for any $p \ge 1$, there exist $m=m(p)$ and $C=C(p)$, such that
\[
\|f\|_m \le C\|f\|_{p, S}, \ \text{for all}\ f \in A_{\varphi}^{-p, S}.
\]
\item [(c)] For any $p \ge 1$, there exist $m=m(p)$ and $C=C(p)$, such that
\[
\|f\|_m \le C\|f\|_{p, S}, \ \text{for all}\ f \in A_\Phi^{-\infty}.
\]
\end{enumerate}
\end{lem}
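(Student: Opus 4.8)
The plan is to run the cycle $(c)\Rightarrow(b)\Rightarrow(a)\Rightarrow(c)$, reducing $(a)$ to a statement about continuity of inclusions via the universal property of locally convex inductive limits, with the last implication carrying all the weight.

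First I would record the permanent inequality $\tau_S\le\tau$. Since $\Phi$ is increasing we have $\|f\|_{p,S}\le\|f\|_p$, so the inclusion $A_\varphi^{-p}\hookrightarrow A_\varphi^{-p,S}$ is continuous, hence so is $A_\varphi^{-p}\hookrightarrow A_\varphi^{-p,S}\hookrightarrow(A_\Phi^{-\infty},\tau_S)$ for each $p$, and the universal property of $\tau=\textrm{lim ind}\,A_\varphi^{-p}$ makes the identity $(A_\Phi^{-\infty},\tau)\to(A_\Phi^{-\infty},\tau_S)$ continuous. Therefore $(a)$ is equivalent to $\tau\le\tau_S$, i.e.\ to continuity of the identity $(A_\Phi^{-\infty},\tau_S)\to(A_\Phi^{-\infty},\tau)$, which by the universal property of $\tau_S=\textrm{lim ind}\,A_\varphi^{-p,S}$ is in turn equivalent to: for every $p$, the inclusion $A_\varphi^{-p,S}\hookrightarrow(A_\Phi^{-\infty},\tau)$ is continuous. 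With this reformulation $(b)\Rightarrow(a)$ is immediate, since the inclusion of the step $A_\varphi^{-m}$ into $(A_\Phi^{-\infty},\tau)$ is continuous and $A_\varphi^{-p,S}\hookrightarrow A_\varphi^{-m}$ composes with it; and $(c)\Rightarrow(b)$ is purely formal, the estimate $\|f\|_m\le C\|f\|_{p,S}$ forcing every $f\in A_\varphi^{-p,S}$ into $A_\varphi^{-m}$ with a norm bound.

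The substantive step is $(a)\Rightarrow(c)$. Fix $p$. By the reformulation, the inclusion $A_\varphi^{-p,S}\hookrightarrow(A_\Phi^{-\infty},\tau)$ is continuous, so it sends the seminorm unit ball $B_p=\{f:\|f\|_{p,S}\le 1\}$, which is bounded in the seminormed space $A_\varphi^{-p,S}$, to a $\tau$-bounded subset of $A_\Phi^{-\infty}$. Now I would invoke that $(A_\Phi^{-\infty},\tau)$ is a \emph{regular} $(LB)$-space, so that the $\tau$-bounded set $B_p$ already lies in, and is bounded in the norm of, a single step: there are $m=m(p)$ and $C=C(p)$ with $\|f\|_m\le C$ for all $f\in B_p$. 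Homogeneity then upgrades this to $\|f\|_m\le C\|f\|_{p,S}$ for every $f\in A_\varphi^{-p,S}$ (if $\|f\|_{p,S}=0$, apply the bound to $tf$ for all $t>0$ to conclude $\|f\|_m=0$), and it extends to all $f\in A_\Phi^{-\infty}$ because the inequality is vacuous when $\|f\|_{p,S}=+\infty$; this is precisely $(c)$. The main obstacle is this regularity input: one must know that $\tau$-bounded subsets of $A_\Phi^{-\infty}$ sit boundedly inside one of the Banach steps $A_\varphi^{-m}$. For the classical weights the linking maps $A_\varphi^{-p}\hookrightarrow A_\varphi^{-(p+1)}$ are compact (a normal-families argument using $\varphi_p/\varphi_{p+1}\to 0$ near $|z|=1$), so $(A_\Phi^{-\infty},\tau)$ is a $(DFS)$-space and regularity is automatic; for a general increasing $\Phi$ one has to establish regularity directly from the structure of these weighted sup-norm spaces, and that is where the real work lies.
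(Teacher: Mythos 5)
The paper does not actually prove this lemma --- it is imported verbatim from \cite[Proposition 2.2]{LHK2} --- so there is no in-paper argument to measure you against. Your reduction is the standard one and is correct as far as it goes: $\tau_S\le\tau$ always holds because $S\subseteq\B$ gives $\|f\|_{p,S}\le\|f\|_p$; the universal property of the inductive limit converts weak sufficiency into continuity of each inclusion $A_\varphi^{-p,S}\hookrightarrow(A_\Phi^{-\infty},\tau)$; and the implications $(c)\Rightarrow(b)\Rightarrow(a)$, together with the homogeneity upgrade in $(a)\Rightarrow(c)$, are all sound. (In fact $(b)$ and $(c)$ are literally equivalent, since $\|f\|_{p,S}=+\infty$ off $A_\varphi^{-p,S}$ makes the inequality in $(c)$ vacuous there.)

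The one genuine issue is the ingredient you flag but do not supply: that every $\tau$-bounded subset of $A_\Phi^{-\infty}$ lies, boundedly, inside a single step $A_\varphi^{-m}$, i.e.\ regularity of the $(LB)$-space. Without it, $(a)\Rightarrow(c)$ is incomplete, and your fallback (compact linking maps, hence a $(DFS)$-space) implicitly uses condition $\mathbf{(C_1)}$, which is \emph{not} assumed in this lemma --- it is stated for arbitrary increasing weights. The gap is fillable in that generality, though, and more cheaply than you fear: for any weight $\varphi_p>1$ the maximum modulus principle gives $\sup_{|z|\le r}|f(z)|\le\|f\|_p\,\varphi_p(r)$ for every $r<1$, so the closed unit ball of $A_\varphi^{-p}$ is a normal family, and it is closed under local uniform convergence; hence it is compact in the compact-open topology. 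A countable inductive limit of Banach spaces whose closed unit balls are compact in some coarser Hausdorff locally convex topology is regular (Mujica's completeness criterion; this is also the content of the Bierstedt--Meise--Summers theory of weighted inductive limits of spaces of holomorphic functions). With that one citation your argument closes; without it, the decisive step of the proof is missing.
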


The following proposition is an easy modification of \cite[Proposition 3.1]{LHK}, with replacing the factor $(1-|z|)^p$ by $\varphi_p(|z|)^{-1}$, and hence we omit the proof here.

\begin{prop}
Any weakly sufficient set for $A_{\Phi}^{-\infty}$ is a set of uniqueness for this space.
\end{prop}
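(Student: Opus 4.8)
The plan is to argue by contraposition: assume $S$ is \emph{not} a set of uniqueness and produce a witness showing $S$ is not weakly sufficient, using the characterization in Lemma \ref{lem01}(c). So suppose there exists $f \in A_\Phi^{-\infty}$ with $f \not\equiv 0$ but $f|_S \equiv 0$. Since $f \in A_\Phi^{-\infty}$, we have $f \in A_\varphi^{-p_0}$ for some $p_0 \ge 1$, and in particular $\|f\|_{p_0, S} = 0$ because $f$ vanishes identically on $S$. If $S$ were weakly sufficient, then by Lemma \ref{lem01}(c) applied with $p = p_0$, there would exist $m = m(p_0)$ and $C = C(p_0)$ with $\|g\|_m \le C \|g\|_{p_0, S}$ for all $g \in A_\Phi^{-\infty}$; taking $g = f$ gives $\|f\|_m \le C \cdot 0 = 0$, hence $f \equiv 0$, a contradiction. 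Therefore $S$ cannot be weakly sufficient, which is the contrapositive of the claim.

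The argument is essentially immediate once one invokes the equivalence (a) $\Leftrightarrow$ (c) of Lemma \ref{lem01}: the only mild point to check is that a nonzero element of $A_\Phi^{-\infty}$ lands in some $A_\varphi^{-p_0}$, which is just the definition of the union $A_\Phi^{-\infty} = \bigcup_{p \ge 1} A_\varphi^{-p}$, and that vanishing on $S$ forces the seminorm $\|f\|_{p_0,S} = \sup_{z\in S} |f(z)|/\varphi_{p_0}(|z|)$ to be zero. Both are trivial. One could equally well phrase the proof directly (not by contraposition): given weak sufficiency and $f$ vanishing on $S$, pick the $p_0$ with $f \in A_\varphi^{-p_0}$, note $\|f\|_{p_0,S}=0$, and conclude $\|f\|_{m(p_0)} = 0$ so $f=0$; this is the form that most closely mirrors \cite[Proposition 3.1]{LHK} with $(1-|z|)^p$ replaced by $\varphi_p(|z|)^{-1}$, as the remark preceding the statement indicates.

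There is no real obstacle here — the content was already packaged into Lemma \ref{lem01}, and the proposition is the kind of ``sanity'' implication (a) $\implies$ uniqueness that costs one line given the right characterization. The only thing one must be slightly careful about is not to confuse $A_\varphi^{-p,S}$ (defined via the $S$-seminorm) with $A_\varphi^{-p}$: the hypothesis $f|_S = 0$ makes $f \in A_\varphi^{-p,S}$ for \emph{every} $p$ with norm zero, but to run Lemma \ref{lem01}(c) we want $f \in A_\Phi^{-\infty}$, which is automatic. Hence the proof is complete in a few sentences, and indeed the authors simply defer to the modification of \cite[Proposition 3.1]{LHK}.
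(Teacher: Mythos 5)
Your argument is correct and is essentially the proof the paper has in mind (it defers to the obvious modification of \cite[Proposition 3.1]{LHK}): vanishing on $S$ forces $\|f\|_{p,S}=0$, and the seminorm inequality in Lemma \ref{lem01}(c) then gives $\|f\|_m=0$, hence $f\equiv 0$ since $\varphi_m>0$. No gaps; the contrapositive phrasing versus the direct phrasing is immaterial.
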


\begin{rem} \label{rem01}
We give an example of a set of uniqueness which is not a weakly sufficient set. Let $S=\{z \in \B: |z| \le 1/2\}$, $\varphi_p(r)=(1-r)^{-p}, p \in \N$, and the test functions
\[
f_k(z)=3^{-k} (1-z_1)^{-k}, k \in \N,
\]
where $z=(z_1, \dots, z_n) \in \B$. Clearly, $S$ is a set of uniqueness. Moreover, for a fixed $p_0>0$, a direct calculation shows $\|f_k\|_{p_0, S}$ is uniformly bounded in $k$, but for any $p>0$, $\|f_k\|_p=\infty$ for $k$ sufficiently large, which implies $S$ fails to be a weakly sufficient set.
\end{rem}

Now we proceed the other direction, and we are inspired by the idea from \cite{LHK}. For a given $\Phi=(\varphi_p)_{p=1}^\infty$, we consider the following conditions
\begin{equation} \tag{$\mathbf{C_1}$}
\lim_{r \to 1} \frac{\varphi_{p+1}(r)}{\varphi_p(r)}=\infty, \ \text{for all}\ p \ge 1,
\end{equation}
and
\begin{equation} \tag{$\mathbf{C_2}$}
\text{for all}\ p \ge 1, \ \frac{\varphi_{p+1}}{\varphi_p} \ \text{is bounded on any compact subset of}\ [0,1).
\end{equation}
Clearly, if all $\varphi_p$ are \emph{continuous} on $[0, 1)$, then $\mathbf{(C_2)}$ is satisfied.

\begin{exa}
We give some examples to show that these conditions are independent each of other.
\begin{enumerate}
\item The classical weights $\Phi=( (1-r)^{-p} )_{p=1}^\infty$ satisfy both $\mathbf{(C_1)}$ and $\mathbf{(C_2)}$.
\item For any $a>2$, the weights $\Phi=\left( a+r-\frac{1}{p} \right)_{p=1}^\infty$ satisfy $\mathbf{(C_2)}$ but not $\mathbf{(C_1)}$.
\item For each $p \in \N$, let
\[
\varphi_p(r)=\begin{cases}
\left(\frac{1}{2}-r \right)^{-p}, & r \in \left[0, \frac{1}{2}\right),\\
p^p, & r \in \left[\frac{1}{2}, 1 \right).
\end{cases}
\]
Then the weights $\Phi=(\varphi_p)_{p=1}^\infty$ satisfy $\mathbf{(C_1)}$ but not $\mathbf{(C_2)}$.
\item For $p \in \N$, consider
\[
\varphi_p(r)=\begin{cases}
\left(\frac{1}{2}-r \right)^{-p}, & r \in \left[0, \frac{1}{2}\right),\\
1, & r \in \left[\frac{1}{2}, 1 \right).
\end{cases}
\]
Then the weights $\Phi=(\varphi_p)_{p=1}^\infty$ do not satisfy neither $\mathbf{(C_1)}$ nor $\mathbf{(C_2)}$.
\end{enumerate}
\end{exa}

The following is the main result in this section.

\begin{thm} \label{thm01}
Suppose $\Phi$ satisfies conditions ${\bf (C_1)}$ and ${\bf (C_2)}$. Then $S \subset \B$ is a weakly sufficient set for $A_{\Phi}^{-\infty}$ if and only if
\begin{enumerate}
\item[(a)] $S$ is a set of uniqueness for $A_\Phi^{-\infty}$.
\item[(b)] For any $p \ge 1$, there exists an $m=m(p)$, such that $A_\varphi^{-p, S} \subset A_\varphi^{-m}$.
\end{enumerate}
\end{thm}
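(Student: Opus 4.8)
The forward implication needs nothing new: if $S$ is weakly sufficient then the fact established above that every weakly sufficient set is a set of uniqueness gives (a), while Lemma \ref{lem01} (used in the direction (a)$\Rightarrow$(b) of that lemma) provides, for each $p$, an $m=m(p)$ and a continuous embedding $A_\varphi^{-p,S}\hookrightarrow A_\varphi^{-m}$, which in particular contains the set inclusion asserted in (b). So I would concentrate on the converse.

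Assume (a) and (b). By Lemma \ref{lem01} it suffices to upgrade the set inclusion in (b) to a continuous embedding: fixing $p$ and the corresponding $m=m(p)$, I want a constant $C$ with $\|f\|_m\le C\|f\|_{p,S}$ for all $f\in A_\varphi^{-p,S}$. First, because $S$ is a set of uniqueness, $\|\cdot\|_{p,S}$ is a genuine norm on $A_\varphi^{-p,S}$: $\|f\|_{p,S}=0$ forces $f\equiv0$ on $S$, hence $f=0$. I would then invoke the closed graph theorem for the inclusion $\iota\colon (A_\varphi^{-p,S},\|\cdot\|_{p,S})\to(A_\varphi^{-m},\|\cdot\|_m)$, which is a well-defined linear map by (b) and has Banach target. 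Its graph is closed for a reason that is essentially condition (a): if $f_k\to f$ in $\|\cdot\|_{p,S}$ and $f_k\to g$ in $\|\cdot\|_m$, then, point evaluations being continuous for both norms since the weights are finite-valued, $f_k\to f$ pointwise on $S$ and $f_k\to g$ pointwise on $\B$; thus $f=g$ on $S$, and as $f-g\in A_\Phi^{-\infty}$, uniqueness forces $f=g$. Hence everything reduces to one point: that $(A_\varphi^{-p,S},\|\cdot\|_{p,S})$ is complete.

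This completeness is the crux, and it is exactly here that (b) together with $(C_1)$ and $(C_2)$ enter essentially — without (b) completeness fails (the space attached to $S$ in Remark \ref{rem01} is not complete: the partial sums of $\sum_k 3^{-k}(1-z_1)^{-k}$ form a $\|\cdot\|_{p_0,S}$-Cauchy sequence whose pointwise limit on $S$ has a pole inside $\B$). Given a $\|\cdot\|_{p,S}$-Cauchy sequence $(f_k)$: it is $\|\cdot\|_{p,S}$-bounded, converges pointwise on $S$ with a uniform weighted bound, and by (b) lies in the Banach space $A_\varphi^{-m}$. The plan is: (i) show $(f_k)$ is bounded in $A_\varphi^{-m}$; (ii) apply Montel's theorem — here $(C_2)$ is used to bound the weights, hence the family $(f_k)$, on compact subsets of $\B$ — to extract a subsequence converging locally uniformly to some $f\in A_\varphi^{-m}\subseteq A_\Phi^{-\infty}$; (iii) use that $(f_k)$ is $\|\cdot\|_{p,S}$-Cauchy and $f_{k_j}\to f$ pointwise on $S$ to conclude $\|f_k-f\|_{p,S}\to0$ and $f\in A_\varphi^{-p,S}$. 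For step (i), if $(f_k)$ were unbounded in $A_\varphi^{-m}$ one rescales to $g_k\in A_\varphi^{-p,S}$ with $\|g_k\|_m=1$ but $\|g_k\|_{p,S}\to0$; Montel and (a) force $g_k\to0$ locally uniformly, after which $(C_1)$ forces the point where $\|g_k\|_m=1$ is nearly attained out to $\partial\B$ and makes $\|g_k\|_{m+1}\to0$; one then assembles a series $G=\sum_k a_k g_k$ which, by the decay of $\|g_k\|_{m+1}$ and the Banach property of $A_\varphi^{-(m+1)}$, lies in $A_\varphi^{-p,S}$, so that (b) places $G$ in $A_\varphi^{-m}$ — contradicting the way $G$ is built to grow like $\varphi_m$ along a boundary sequence.

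I expect the real obstacle to be precisely that contradiction in step (i): (b) is only a set inclusion, and extracting from it the quantitative bound that rules out the escaping sequence $(g_k)$ is delicate, forcing one to balance the coefficients $a_k$ against the locally uniform decay of the $g_k$ and against the boundary growth of the ratios $\varphi_{m+1}/\varphi_m$ supplied by $(C_1)$, while retaining the compact-set control furnished by $(C_2)$ — this is where the argument of \cite{LHK} for the classical weights has to be reworked for a general $\Phi$. Once completeness is in hand, the closed graph theorem gives the boundedness of $\iota$, and Lemma \ref{lem01} concludes that $S$ is weakly sufficient.
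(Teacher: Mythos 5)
Your architecture differs from the paper's: you route the converse through the closed graph theorem and the completeness of $(A_\varphi^{-p,S},\|\cdot\|_{p,S})$, whereas the paper fixes $q>m+2$, shows directly that $U:=U_{p,S}\setminus U_q$ is bounded in $A_\varphi^{-q}$, and does so via a key lemma (Lemma \ref{lem02}) asserting that $\|\cdot\|_{q-1}$ is attained on a single compact set $K_s$ uniformly over $U$; condition $\mathbf{(C_2)}$ then gives $\|f\|_{q-1}\le M\|f\|_q$ on $U$, Montel plus $\mathbf{(C_1)}$ make $\{f/\|f\|_q\}_{f\in U}$ relatively compact in $A_\varphi^{-q}$, and uniqueness is used only at the very end, to rule out a limit $g_0$ with $\|g_0\|_q=1$ but $\|g_0\|_{q,S}=0$. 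Your forward direction and several intermediate observations (identification of the two limits via (a); $\|g_k\|_{m+1}\to 0$ from locally uniform decay together with $\mathbf{(C_1)}$) are correct.

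The genuine gap is your step (i), which you yourself flag as ``the real obstacle'' but never execute --- and it carries essentially all of the content of the theorem. Indeed, your argument for step (i) uses only that $(f_k)$ is $\|\cdot\|_{p,S}$-bounded, not that it is Cauchy, so proving it amounts to proving the continuity of $\iota$ on bounded sequences directly; the closed-graph/completeness scaffolding does not actually reduce the difficulty. Moreover, the construction as sketched has an ordering problem. With $\|g_k\|_m=1$, near-maximizers $w_k\to\partial\B$, and necessarily increasing coefficients $a_k$ (needed so that the main term dominates the finitely many earlier terms), the tail at $w_k$ can only be estimated by $\varphi_{m+1}(|w_k|)\sum_{j>k}a_j\|g_j\|_{m+1}$, and after dividing by $\varphi_m(|w_k|)$ this carries the factor $R_k:=\varphi_{m+1}(|w_k|)/\varphi_m(|w_k|)$, which tends to infinity by $\mathbf{(C_1)}$ and is not bounded above in terms of anything you control. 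So the functions $g_j$ with $j>k$ must be selected \emph{after} $w_k$ (hence $R_k$) is known; a fixed sequence with coefficients chosen afterwards does not suffice. This is precisely the interleaving that the paper's Lemma \ref{lem02} performs: at stage $t$ it first uses $\mathbf{(C_1)}$ to choose a compact set $K_{s_t}$ on which $\varphi_\ell/\varphi_{\ell-1}$ dominates the accumulated norms $\sum_{k<t}\|u_k\|_{\ell-1}$ of the previously selected functions, and only then selects $f_t$ whose norm is nearly attained outside $K_{s_t}$; with decreasing coefficients $4^{-k}$ the resulting series lies in $A_\varphi^{-p,S}$ but not in $A_\varphi^{-(\ell-1)}$, contradicting (b). Your plan is repairable along these lines (by an inductive subsequence extraction), but as written the decisive estimate is missing.
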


This result shows that under conditions $\mathbf{(C_1)}-\mathbf{(C_2)}$, a set-inclusion (statement (b)) implies a continuous embedding.

\begin{proof}
Since the necessity is obvious, it suffices to show the sufficiency.  For any $p \ge 1$ given, there exists by (b), an $m=m(p)$, such that $A_\varphi^{-p, S} \subset A_\varphi^{-m}$. Without loss of generality, we may assume that $m \ge p$.

Now we take and fix an integer $q>m+2$ and let us denote $U_{p, S}$ the unit ball in $A_\varphi^{-p, S}$ and $U_q$ the unit ball in $A_\varphi^{-q}$. Since
\[
\sup_{f \in U_{p, S}} \|f\|_q \le \sup_{f \in U_{p, S} \backslash U_q} \|f\|_q+\sup_{f \in U_q} \|f\|_q,
\]
it suffices for us to show that $U:=U_{p, S} \backslash U_q$ is bounded in the space $A_\varphi^{-q}$.

At this point we pause and prove the following result for the norm $\|f\|_q$.

\begin{lem} \label{lem02}
Let $\ell>m$ be some integer. Then there exists some $s=s(\ell) \ge 1, s \in \N$, such that the $\|f\|_{\ell}$ is attained on the compact set $K_s:=\left\{z \in \B: |z| \le \frac{s}{s+1} \right\}$ for all $f \in U$.
\end{lem}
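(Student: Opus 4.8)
The plan is to argue by contradiction: if no such finite $s$ exists, then for every $s\in\N$ we can find $f_s\in U$ whose norm $\|f_s\|_\ell$ is \emph{not} attained on $K_s$, i.e. the supremum defining $\|f_s\|_\ell$ is strictly larger than $\sup_{z\in K_s}|f_s(z)|/\varphi_\ell(|z|)$, and in fact (after normalizing) is approached only along a sequence $z_s$ with $|z_s|\to 1$. First I would set up this contradiction hypothesis carefully, extracting for each $s$ a function $f_s\in U=U_{p,S}\setminus U_q$ and points $z_s\in\B$ with $|z_s|>\frac{s}{s+1}$ such that $|f_s(z_s)|/\varphi_\ell(|z_s|)$ is within, say, a factor $2$ of $\|f_s\|_\ell$, while the sup over $K_s$ is much smaller. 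The key leverage is that $f_s\in U_{p,S}$ gives the uniform bound $\|f_s\|_{p,S}\le 1$, hence $|f_s(z)|\le\varphi_p(|z|)$ for all $z\in S$; and $f_s\notin U_q$ gives $\|f_s\|_q>1$, i.e. $\sup_z |f_s(z)|/\varphi_q(|z|)>1$.

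The heart of the argument is to play the two indices $p<\ell<q$ against each other using condition $\mathbf{(C_1)}$. Since $f_s\in A_\varphi^{-p,S}\subset A_\varphi^{-m}$ by (b), and $m<\ell$ and $\ell<q$, we know $f_s\in A_\varphi^{-\ell}$, so $\|f_s\|_\ell<\infty$; the issue is only \emph{where} that finite supremum is attained. I would show that the ratio $\varphi_p(r)/\varphi_\ell(r)\to 0$ as $r\to1$ — which follows by iterating $\mathbf{(C_1)}$, since $\ell\ge m+1>p$ — forces the contribution from points near the boundary to be controlled. More precisely: on $S$ near the boundary, $|f_s(z)|/\varphi_\ell(|z|)\le \varphi_p(|z|)/\varphi_\ell(|z|)$, which is small uniformly in $s$; combined with the sup over $K_s$ being small by the contradiction hypothesis, one concludes that the $\|f_s\|_{\ell}$-norm cannot be realized either on $K_s$ or on $S\setminus K_s$. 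To handle points of $\B\setminus S$ near the boundary, I would instead use $\mathbf{(C_1)}$ one step up: writing $\ell<q$, a subharmonicity/maximum-modulus estimate on polydiscs or balls shrinking toward $\partial\B$, together with the finiteness of $\|f_s\|_q$ and $\varphi_\ell/\varphi_q\to0$, shows that $|f_s(z)|/\varphi_\ell(|z|)$ is also small near $\partial\B$ off $S$ — this is the step where $\mathbf{(C_2)}$ enters, to keep $\varphi$ ratios bounded on the compact "collar" involved in the maximum-modulus estimate. The upshot is a uniform (in $s$) bound showing $\|f_s\|_\ell$ is attained on some fixed $K_{s_0}$, contradicting the choice of $f_s$.

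The main obstacle I anticipate is making the "off $S$, near the boundary" estimate genuinely uniform over the family $U$, since $U$ is not a norm-bounded set in any single $A_\varphi^{-q}$ a priori — indeed $\|f\|_q>1$ is all we have, with no upper bound. I expect the resolution is that one does \emph{not} need $U$ bounded in $A_\varphi^{-q}$ here; rather, for the purposes of \emph{locating} the supremum, what matters is the comparison between $\varphi_\ell$ and the \emph{next} weight, exploited via a normalization $g_s=f_s/\|f_s\|_\ell$ so that $\|g_s\|_\ell=1$, and then one only needs that $\|g_s\|_\ell$ is attained on a compact set. With $\|g_s\|_\ell=1$ fixed, the estimates above become uniform, and the role of the outer loop in the paper (choosing $q>m+2$ and then bounding $U$ in $A_\varphi^{-q}$) is served afterwards. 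A secondary technical point is choosing the right growth lemma in several variables: on $\B\subset\C^n$ one should use that $\log|f|$ is plurisubharmonic and average over suitably scaled balls $B(z,c(1-|z|))\subset\B$, which is standard but must be set up so that the weight-ratio factors that appear are exactly the ones $\mathbf{(C_1)}$–$\mathbf{(C_2)}$ control.
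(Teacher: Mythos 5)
There is a genuine gap, and it sits exactly at the point you flag as ``the main obstacle.'' Your plan is to show directly that for each $f\in U$ the supremum defining $\|f\|_\ell$ must live on a fixed compact set, by controlling $|f(z)|/\varphi_\ell(|z|)$ near $\partial\B$: on $S$ via $\|f\|_{p,S}\le 1$ and $\varphi_p/\varphi_\ell\to 0$ (this part is fine), and off $S$ via subharmonicity together with ``the finiteness of $\|f_s\|_q$ and $\varphi_\ell/\varphi_q\to 0$.'' That second estimate goes the wrong way: a bound $|f(z)|\le \|f\|_q\,\varphi_q(|z|)$ yields $|f(z)|/\varphi_\ell(|z|)\le \|f\|_q\,\varphi_q(|z|)/\varphi_\ell(|z|)$, and since $q>\ell$ the ratio $\varphi_q/\varphi_\ell$ \emph{blows up} under $\mathbf{(C_1)}$; to push the supremum onto a compact set you need a uniform bound in a norm with \emph{smaller} index than $\ell$, i.e.\ $\sup_{f\in U}\|f\|_m<\infty$ — which is essentially the conclusion of the whole theorem and cannot be assumed. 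Normalizing to $g_s=f_s/\|f_s\|_\ell$ does not repair this: $\|g_s\|_\ell=1$ only fixes the value of the supremum, not its location, so the argument is circular. Nor can plurisubharmonic averaging transfer the bound from $S$ to $\B\setminus S$: $S$ is an arbitrary set satisfying (a)–(b) of Theorem~\ref{thm01} (it may be a sparse sequence), so $|f|\le\varphi_p$ on $S$ gives no pointwise control nearby. Each \emph{individual} $f\in U$ does attain $\|f\|_\ell$ on a compact set (since $\|f\|_m<\infty$ and $\varphi_m/\varphi_\ell\to0$), but the compact set depends on $\|f\|_m$, and uniformity over $U$ is precisely what is at stake.

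The paper avoids this by not attempting a direct uniform estimate at all. Assuming the lemma fails, it runs a gliding-hump construction: it inductively selects normalized functions $u_k=f_k/\|f_k\|_\ell$ whose $\ell$-norms concentrate outside larger and larger balls $K_{s_k}$ (the radii chosen via $\mathbf{(C_1)}$ so that $\varphi_\ell/\varphi_{\ell-1}$ dominates the accumulated $\|u_j\|_{\ell-1}$ there), and forms $u=\sum_k 4^{-k}u_k$. The series converges in $A_\varphi^{-p,S}$ because $\|u_k\|_{p,S}<1$, yet at the concentration points $z_t$ the $t$-th term dominates and forces $\|u\|_{\ell-1}=\infty$. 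This produces $u\in A_\varphi^{-p,S}\setminus A_\varphi^{-(\ell-1)}$, contradicting the \emph{set inclusion} $A_\varphi^{-p,S}\subset A_\varphi^{-m}\subset A_\varphi^{-(\ell-1)}$ from hypothesis (b) — which is the quantitative lever your proposal never engages. If you want to rescue your write-up, this series construction (or an equivalent uniform-boundedness/closed-graph style argument exploiting (b)) is the missing idea.
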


\begin{proof}
We prove the result by contradiction. Namely, for any $s \in \N$, there exists a $f_s \in U$, such that
\[
\sup_{K_s} \frac{|f_s(z)|}{\varphi_{\ell}(|z|)}<\|f_s\|_{\ell},
\]
from which it follows that
\begin{equation} \label{eq201}
\sup_{\B \backslash K_s} \frac{|f_s(z)|}{\varphi_{\ell}(|z|)}=\|f_s\|_{\ell}.
\end{equation}
The proof will complete if we can construct a function $u \in A_\varphi^{-p, S}$ but $u \notin A_\varphi^{-({\ell}-1)}$, which will contradict to our early assumption $A_\varphi^{-p, S} \subset A_\varphi^{-m} \subset A_\varphi^{-({\ell}-1)}$ (as $m \le {\ell}-1$). Moreover, this function  is constructed in the form of a series
\[
u(z)=\sum_{k=1}^\infty c_ku_k(z)
\]
where $u_k \in \calU:=\left\{ \frac{f}{\|f\|_{\ell}}; f \in U \right\}$ will be defined inductively as follows.

Take an arbitrary $u_1 \in \calU$. If we have $u_1, u_2, \dots, u_{t-1} \in \calU (t \ge 2)$, then $u_t$ is determined in the following way:

\medskip
\textit{Step I:} By condition  ${\bf (C_1)}$, we can choose $s_t \in \N$ large enough so that
\begin{equation} \label{eq202}
\frac{\varphi_{\ell}(|z|)}{\varphi_{{\ell}-1}(|z|)} \ge 3 \cdot 4^{t+1} \sum_{k=1}^{t-1} \|u_k\|_{{\ell}-1}, \quad \text{for all} z \in \B \backslash K_{s_t}.
\end{equation}
Note that here the quantity $\|u_k\|_{{\ell}-1}$ is well-defined. Indeed, for any $f \in U=U_{p, S} \backslash U_{\ell}$, we have $f \in A_\varphi^{-p, S} \subset A_\varphi^{-m} \subset A_\varphi^{-({\ell}-1)} \subset A_\varphi^{-{\ell}}$, which implies the desired claim.

\medskip
\textit{Step II:} For the $K_{s_t}$ chosen above, there is an $f_t \in U$ such that \eqref{eq201} holds, and we define $
u_t(z)=\frac{f_t(z)}{\|f_t\|_{{\ell}}} \in \calU$.

\medskip
Thus from the above two steps, a sequence $\{u_k\} \subset \calU$ is defined. Taking $c_k=4^{-k}, k=1, 2, \dots$, we have
\[
u(z)=\sum_{k=1}^\infty 4^{-k}u_k(z),
\]
which implies
\[
\|u\|_{\ell} \le \sum_{k=1}^\infty \frac{\|u_k\|_{\ell}}{4^k}=\sum_{k=1}^\infty \frac{1}{4^k}<\infty.
\]
This shows that $u \in A_\varphi^{-{\ell}}$.

\medskip
\textit{Claim I: $u \in A_\varphi^{-p, S}$.}

\medskip
Note that for any $f \in U=U_{p, S} \backslash U_{\ell}$, we have $\|f\|_{p, S} \le 1$ and $\|f\|_{\ell}>1$. Hence, for any $g \in \calU$, it always holds that $\|g\|_{p, S}<1$, which implies
$$
\|u\|_{p, S} \le \sum_{k=1}^\infty \frac{1}{4^k} \|u_k\|_{p, S} \le \sum_{k=1}^\infty \frac{1}{4^k}<\infty,
$$
which implies the first claim.

\medskip
\textit{Claim II: $u \notin A_\varphi^{-({\ell}-1)}$.}

\medskip
Take and fix any $t \in \N$. Then from the Step II above, there exists an $z_t \in \B \backslash K_{s_t}$ such that
\begin{equation} \label{eq203}
\frac{|f_t(z_t)|}{\varphi_{\ell}(|z_t|)} \ge \frac{\|f_t\|_{\ell}}{2}.
\end{equation}
Thus
\begin{eqnarray*}
|u(z_t)|%
&=& \left| \sum_{k=1}^\infty \frac{u_k(z_t)}{4^k} \right| \ge \left| \frac{u_t(z_t)}{4^t} \right|-  \left| \sum_{k \neq t} \frac{u_k(z_t)}{4^k} \right| \\
&\ge&  \left| \frac{u_t(z_t)}{4^t} \right|-  \sum_{k \neq t} \frac{|u_k(z_t)|}{4^k} \\
&=&  \left| \frac{u_t(z_t)}{4^t} \right|-  \sum_{k=1}^{t-1} \frac{|u_k(z_t)|}{4^k}- \sum_{k=t+1}^{\infty} \frac{|u_k(z_t)|}{4^k}
= J_0-J_1-J_2.
\end{eqnarray*}

\medskip
\noindent\textit{Estimate of $J_0$}. Applying \eqref{eq203}, we have
\[
J_0 \ge \frac{1}{4^t} \cdot \frac{\varphi_{\ell}(|z_t|)}{2}.
\]

\medskip
 \noindent\textit{Estimate of $J_1$}. For each $k \in\N$, we have
\[
|u_k(z_t)|= \frac{|u_k(z_t)|}{\varphi_{{\ell}-1}(|z_t|)} \cdot \varphi_{{\ell}-1}(|z_t|) \le \|u_k\|_{{\ell}-1} \varphi_{{\ell}-1}(|z_t|),
\]
and hence
\[
J_1 \le \varphi_{{\ell}-1}(|z_t|) \cdot \sum_{k=1}^{t-1} \frac{\|u_k\|_{{\ell}-1}}{4^k}.
\]

\medskip
 \noindent\textit{Estimate of $J_2$}. Similarly, for each $k \in \N$, we have
\[
|u_k(z_t)|= \frac{|u_k(z_t)|}{\varphi_{\ell}(|z_t|)} \cdot \varphi_{\ell}(|z_t|) \le \varphi_{\ell}(|z_t|),
\]
which gives
\[
J_2 \le \varphi_{\ell}(|z_t|) \cdot  \sum_{k=t+1}^\infty \frac{1}{4^k}=\frac{\varphi_{\ell}(|z_t|)}{3 \cdot 4^t}.
\]
Combining the three estimates above yields
\begin{eqnarray*}
|u(z_t)|%
&\ge& \left( \frac{1}{2} \cdot \frac{1}{4^t}- \frac{1}{3} \cdot \frac{1}{4^t} \right)  \cdot \varphi_{\ell}(|z_t|)-\varphi_{{\ell}-1}(|z_t|) \cdot \sum_{k=1}^{t-1} \frac{\|u_k\|_{{\ell}-1}}{4^k} \\
&=& \frac{ \varphi_{{\ell}-1}(|z_t|)}{6 \cdot 4^t} \cdot \frac{\varphi_{\ell}(|z_t|)}{\varphi_{{\ell}-1}(|z_t|)}-\varphi_{{\ell}-1}(|z_t|) \cdot \sum_{k=1}^{t-1} \frac{\|u_k\|_{{\ell}-1}}{4^k}\\
&\ge& \frac{ \varphi_{{\ell}-1}(|z_t|)}{6 \cdot 4^t} \cdot 3 \cdot 4^{t+1} \sum_{k=1}^{t-1} \|u_k\|_{{\ell}-1}-\varphi_{{\ell}-1}(|z_t|) \cdot \sum_{k=1}^{t-1} \frac{\|u_k\|_{{\ell}-1}}{4^k}\\
&& \quad \quad (\textrm{by \eqref{eq202}}) \\
&\ge& \varphi_{{\ell}-1}(|z_t|) \cdot \sum_{k=1}^{t-1} \|u_k\|_{{\ell}-1}.
\end{eqnarray*}
Noting the easy fact that $\|u_k\|_{{\ell}-1} \ge \|u_k\|_{\ell}=1, \text{for all} k \ge 1$, we have for any $t \ge 2$,
$$
\|u\|_{{\ell}-1}= \sup_{z \in \B} \frac{|u(z)|}{\varphi_{{\ell}-1}(|z|)} \ge \frac{|u(z_t)|}{\varphi_{{\ell}-1}(|z_t|)} \ge  \sum_{k=1}^{t-1} \|u_k\|_{{\ell}-1}>t-1,
$$
which implies $\|u\|_{{\ell}-1}=\infty$ and therefore $u \notin A_{\varphi}^{-({\ell}-1)}$.

Thus, there exists some $s$, such that the $\|f\|_{\ell}$ is attained on $K_s$ uniformly for all $f \in U$.
\end{proof}

\bigskip
Now return back to the proof of Theorem \ref{thm01}. By Lemma \ref{lem02}, we take and fix some $s \in \N$ so that $\|f\|_{q-1}$ is attained on $K_s$ for all $f \in U$ (note that $q>m+2$, and hence $q-1>m$).

By condition ${\bf (C_2)}$, there exists $M=M(q, s)>0$, such that
\[
\frac{\varphi_q(|z|)}{\varphi_{q-1}(|z|)} \le M, \quad z \in K_s
\]
and hence for any $f \in U$,
\begin{eqnarray*}
\|f\|_{q-1}%
&=& \sup_{z \in \B} \frac{|f(z)|}{\varphi_{q-1}(|z|)}=\sup_{z \in K_s} \frac{|f(z)|}{\varphi_{q-1}(|z|)}= \sup_{z \in K_s} \frac{|f(z)|}{\varphi_q(|z|)} \cdot \frac{\varphi_q(|z|)}{\varphi_{q-1}(|z|)}\\
&\le& M  \sup_{z \in K_s} \frac{|f(z)|}{\varphi_q(|z|)} \le M\|f\|_q,
\end{eqnarray*}
which shows that the set
\[
\left\{\frac{f}{\|f\|_q} \right\}_{f \in U}
\]
is bounded in $A_\varphi^{-(q-1)}$. An easy application of Montel's theorem and condition  ${\bf (C_1)}$ imply that the identity map $i: A_\varphi^{-(q-1)}\to A_\varphi^{-q}$ is compact, and hence the set $\left\{\frac{f}{\|f\|_q} \right\}_{f \in U}$ is relatively compact in $A_\varphi^{-q}$.

Recall that we have to show $U$ is bounded in $A_\varphi^{-q}$, namely
\begin{equation} \label{eq204}
\sup_{f \in U} \|f\|_q<\infty.
\end{equation}
We prove it by contradiction. Assume \eqref{eq204} does not hold. Then we can take a sequence $\{f_k\} \subset U$ such that
\begin{equation} \label{eq205}
\lim_{k \to \infty} \|f_k\|_{q}=\infty.
\end{equation}
By the remark above, we have the set $\left\{\frac{f_k}{\|f_k\|_q} \right\}_k$ is sequentially compact, namely, there is a sequence
\begin{equation} \label{eq206}
g_\ell:=\frac{f_{k_l}}{\|f_{k_l}\|_q} \to g_0
\end{equation}
as $\ell \to \infty$ in the sense of $A_{\varphi}^{-q}$. Clearly, $g_0 \in A_\varphi^{-q}$ with $\|g_0\|_q=1$. This implies $\|g_0\|_{q, S}=C>0$, since $S$ is a set of uniqueness. Moreover, by \eqref{eq206}, one can also see that $\lim\limits_{\ell \to \infty} \|g_\ell-g_0\|_{q, S}=0$, which implies for sufficient large $\ell$, we have
\[
\|g_\ell\|_{q, S} \ge \frac{\|g_0\|_{q, S}}{2}=\frac{C}{2}.
\]
On the other hand,
\[
\|g_\ell\|_{q, S}=\left\| \frac{f_{k_l}}{\|f_{k_l}\|_q}\right\|_{q, S}=\frac{\|f_{k_l}\|_{q, S}}{\|f_{k_l}\|_q} \le \frac{1}{\|f_{k_l}\|_q},
\]
where in the last inequality, we use the fact that $\|f_{k_l}\|_{q, S} \le \|f_{k_l}\|_{p, S} \le 1$. Combining the above estimates yields 
\[
\|f_{k_l}\|_q \le \frac{2}{C},
\] 
which contradicts to the assumption \eqref{eq205}. The proof is complete.
\end{proof}

\section{Sampling sets and weakly sufficient sets}

In this section, we study the relation between sampling sets and weakly sufficient sets, in particular we investigate under what conditions weakly sufficient sets are sampling, and vice versa.

As said in Introduction, weakly sufficient sets fail to be sampling sets because the growths of $\Phi$ and $\psi$ are too ``independent''. Motivated by this, we introduce some conditions which ``relate'' growths of the pair $(\Phi, \psi)$.
\begin{equation} \tag{$\mathbf{C_3}$}
\text{for each $p\in\N$, there exists}\ \lim_{r \to 1} \frac{\log \varphi_p(r)}{\psi(r)}:=\alpha_p,
\end{equation}
\begin{equation} \tag{$\mathbf{C_4}$}
(\alpha_p)\ \text{is strictly increasing},
\end{equation}
\begin{equation} \tag{$\mathbf{C_5}$}
(\alpha_p)\ \text{is bounded}.
\end{equation}

In case $\mathbf{(C_3)}-\mathbf{(C_5)}$ are satisfied, we denote
\[
\alpha:=\lim_{p\to\infty} \alpha_p,
\]
and for each $\xi\in\B, t\in[0,\alpha)$, we put
\[
R(\psi, \xi, t):=\left\{g \in \calO(\B): |g(\xi)|=e^{t\psi(|\xi|)}\right\}.
\]

Finally, we introduce the following condition for $(\Phi, \psi)$:
\begin{equation} \tag{$\mathbf{C_6}$}
\text{there exists $M>0$, such that for each $\xi \in \B$ and $t \in [0,\alpha)$,}
\end{equation}
\[
\text{there is a $f_{\xi,t} \in R(\psi,\xi,t)$ for which $|f_{\xi, t}(z)| \le M e^{t\psi(|z|)}$, for all $z \in \B$.}
\]

\medskip
Now we can state the main result in this section.

\begin{thm} \label{thm02}
\begin{enumerate}
\item [(1)] Let $\Phi$ and $\psi$ satisfy conditions $\mathbf{(C_1)}-\mathbf{(C_4)}$. Then every $\psi$-sampling set is a weakly sufficient set.
\item [(2)] Let $\Phi$ and $\psi$ satisfy conditions $\mathbf{(C_2)}-\mathbf{(C_6)}$. Let further, $\varphi_1$ is bounded on any compact subset of $[0, 1)$. Then every weakly sufficient set is a $\psi$-sampling set.
\end{enumerate}
\end{thm}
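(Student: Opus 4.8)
The plan is to run both implications through a dictionary, valid under $\mathbf{(C_3)}$, between the indicator $T_{f,\psi}$ and the scale $(A_\varphi^{-p})$. On the one hand, if $f\in A_\varphi^{-p}$ then $|f(z)|\le\|f\|_p\varphi_p(|z|)$, so dividing $\log|f(z)|$ by $\psi(|z|)$ and letting $|z|\to1$ gives $T_{f,\psi}\le\alpha_p$. On the other hand, if $T_{f,\psi}<\alpha_p$, pick $\gamma$ and $\delta>0$ with $T_{f,\psi}<\gamma<\alpha_p-\delta$: then near $\partial\B$ one has $\log|f(z)|<\gamma\psi(|z|)<(\alpha_p-\delta)\psi(|z|)<\log\varphi_p(|z|)$ by $\mathbf{(C_3)}$, while on any compact subset of $\B$ the ratio $|f|/\varphi_p\le|f|$ is bounded since $\varphi_p>1$; hence $f\in A_\varphi^{-p}$. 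In particular $T_{f,\psi}\le\alpha_p<\alpha$ for every $f\in A_\Phi^{-\infty}$, and trivially $T_{f,\psi,S}\le T_{f,\psi}$ (a $\limsup$ over a subcollection). I will also use that $|f(z)|\to0$ as $|z|\to1$ forces $f\equiv0$, by the maximum principle on $\{|z|\le r\}$, $r\to1$.

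For (1) I would show that a $\psi$-sampling set $S$ satisfies conditions (a), (b) of Theorem~\ref{thm01} and then quote that theorem (its hypotheses $\mathbf{(C_1)},\mathbf{(C_2)}$ are assumed in (1)). Condition (a): if $f\in A_\Phi^{-\infty}$ vanishes on $S$ then $T_{f,\psi,S}=-\infty$, so $T_{f,\psi}=-\infty$ by sampling, whence $|f(z)|\le e^{-\psi(|z|)}\to0$ and $f\equiv0$. Condition (b): fix $p$ and $f\in A_\varphi^{-p,S}$; from $|f(z)|\le\|f\|_{p,S}\varphi_p(|z|)$ on $S$ we get $T_{f,\psi,S}\le\alpha_p$, hence $T_{f,\psi}=T_{f,\psi,S}\le\alpha_p<\alpha_{p+1}$ by $\mathbf{(C_4)}$, and the dictionary gives $f\in A_\varphi^{-(p+1)}$, so $m(p)=p+1$ works. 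Theorem~\ref{thm01} then yields that $S$ is weakly sufficient.

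For (2), note first that $\mathbf{(C_3)}$–$\mathbf{(C_4)}$ give $\log(\varphi_{p+1}/\varphi_p)/\psi\to\alpha_{p+1}-\alpha_p>0$ with $\psi\to\infty$, so $\mathbf{(C_1)}$ holds automatically, and I will use Lemma~\ref{lem01} in the form: for each $p$ there are $m(p),C(p)$ with $\|h\|_{m(p)}\le C(p)\|h\|_{p,S}$ for all $h\in A_\Phi^{-\infty}$. Since $T_{f,\psi,S}\le T_{f,\psi}$, it suffices to prove $\tau:=T_{f,\psi}\le\beta:=T_{f,\psi,S}$ for every $f\in A_\Phi^{-\infty}$ (notation local to this proof). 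The case $\tau=-\infty$ is trivial and $-\infty<\tau<0$ is vacuous (it forces $f\equiv0$, so $\tau=-\infty$), so take $\tau\in[0,\alpha)$ and suppose $\beta<\tau$ for a contradiction. Fix $t$ with $\beta<t<\tau$; there is $r_0<1$ with $|f(z)|\le e^{t\psi(|z|)}$ for $z\in S$, $|z|\ge r_0$, and $M_0:=\sup_{|z|\le r_0}|f|<\infty$. For $\lambda\in[0,\alpha-\tau)$, let $q(\lambda)$ be the least index with $\alpha_{q(\lambda)}>t+\lambda$; since $t<\tau$ we have $t+\lambda<t+(\alpha-\tau)<\alpha$, so $q(\lambda)\le Q_0$, the least index with $\alpha_{Q_0}>t+(\alpha-\tau)$, a fixed integer, whence $\alpha_{m(q(\lambda))}\le\alpha_{M^*}$ with $M^*:=\max_{1\le q\le Q_0}m(q)$ fixed. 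Given also $t'\in(t,\tau)$, choose $\zeta_j\to\partial\B$ with $|f(\zeta_j)|\ge e^{t'\psi(|\zeta_j|)}$ (possible since $\tau=\limsup$), take $f_{\zeta_j,\lambda}\in R(\psi,\zeta_j,\lambda)$ from $\mathbf{(C_6)}$, and set $h_j:=f\cdot f_{\zeta_j,\lambda}$. Using that the constant $M$ in $\mathbf{(C_6)}$ is uniform in $\zeta_j,\lambda$, one checks that $h_j\in A_\Phi^{-\infty}$ (near $\partial\B$, $|h_j(z)|\le Me^{(\tau+\eta+\lambda)\psi(|z|)}$ for small $\eta$ with $\tau+\eta+\lambda<\alpha$) and that $\|h_j\|_{q(\lambda),S}$ is finite and bounded \emph{independently of $j$} (on $S$, $|h_j(z)|\le Me^{(t+\lambda)\psi(|z|)}$ for $|z|\ge r_0$ and $\le M_0Me^{\lambda\psi(|z|)}$ for $|z|<r_0$, with $\alpha_{q(\lambda)}>t+\lambda$, $\varphi_{q(\lambda)}>1$, and the weights bounded on compacta). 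Lemma~\ref{lem01} then gives $\|h_j\|_{m(q(\lambda))}\le C_2(\lambda)$ with $C_2(\lambda)$ independent of $j$, so, since $|h_j(\zeta_j)|=|f(\zeta_j)|\,e^{\lambda\psi(|\zeta_j|)}\ge e^{(t'+\lambda)\psi(|\zeta_j|)}$, we get $e^{(t'+\lambda)\psi(|\zeta_j|)}\le C_2(\lambda)\varphi_{m(q(\lambda))}(|\zeta_j|)$; dividing by $\psi(|\zeta_j|)\to\infty$ and using $\mathbf{(C_3)}$ yields $t'+\lambda\le\alpha_{m(q(\lambda))}\le\alpha_{M^*}$. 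As this holds for all $t'<\tau$ and $\lambda<\alpha-\tau$, we obtain $\alpha\le\alpha_{M^*}$, contradicting $\alpha_{M^*}<\alpha$ (by $\mathbf{(C_4)}$). Hence $\tau\le\beta$, so $S$ is $\psi$-sampling.

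I expect the main obstacle in (2) to be exactly this: weak sufficiency alone only provides the crude step ``$h$ bounded on $S$ at level $q$ $\Rightarrow$ $h$ bounded on $\B$ at level $m(q)$'', losing an uncontrolled amount of scale, and the whole role of $\mathbf{(C_6)}$ is to upgrade it to a sharp comparison of indicators. The delicate bookkeeping is to choose the two parameters $\lambda$ (the upward shift supplied by the bounded ``peak'' factor $f_{\zeta_j,\lambda}$) and $t'$ (the growth captured along $\{\zeta_j\}$) so that $q(\lambda)$ stays in a \emph{fixed finite range} — which works because $t<\tau<\alpha$ keeps the ``bottom'' level $t+\lambda$ bounded away from $\alpha$ — while the ``top'' level $t'+\lambda$ is pushed all the way to $\alpha$, saturating the inequality. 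The remaining care goes into the degenerate cases ($\tau=-\infty$, $\tau=0$, $\beta=-\infty$, and $S$ not clustering at $\partial\B$, which is then seen a posteriori not to be weakly sufficient) and into verifying $h_j\in A_\Phi^{-\infty}$ with $j$-uniform seminorm bounds; the hypothesis that $\varphi_1$, hence every $\varphi_p$, is bounded on compact subsets of $[0,1)$ is what makes the compact-part estimates uniform.
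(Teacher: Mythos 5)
Your proposal is correct and follows essentially the same route as the paper: part (1) verifies conditions (a) and (b) of Theorem~\ref{thm01} exactly as the paper does (vanishing on $S$ forces $T_{f,\psi}=-\infty$ hence $f\equiv 0$; the bound on $S$ gives $T_{f,\psi}=T_{f,\psi,S}\le\alpha_p<\alpha_{p+1}$ and hence $f\in A_\varphi^{-(p+1)}$ via $\mathbf{(C_3)}$--$\mathbf{(C_4)}$), and part (2) is the paper's contradiction argument --- multiply $f$ by the peak functions supplied by $\mathbf{(C_6)}$, use the weak-sufficiency inequality of Lemma~\ref{lem01} to upgrade the $\xi$-uniform $S$-bound on $h=f\cdot f_{\xi,\lambda}$ to a global bound at a controlled level, and evaluate at points where $f$ nearly attains $T_{f,\psi}$. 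The only divergence is the bookkeeping at the end of (2): the paper fixes a single peak exponent $x$ with $\alpha_m+2y<x+T_{f,\psi}<\alpha-y$ and contradicts the lower bound at the points $z_k$ directly, whereas you let $\lambda$ and $t'$ vary and pass to the limit to force $\alpha\le\alpha_{M^*}<\alpha$; both are valid, and your explicit handling of the degenerate cases $T_{f,\psi}<0$ and $T_{f,\psi}=-\infty$ is a small point the paper passes over.
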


\begin{rem}\label{r30}
Let us consider the classical case, that is $\varphi_p(r)=(1-r)^{-p}, \ p\in\N$ and $\psi(r)=|\log(1-r)|$.

First, it is easy to check that conditions $\mathbf{(C_1)}-\mathbf{(C_4)}$ are satisfied. So Theorem \ref{thm02} (1) contains the classical result of the implication ``sampling sets $\implies$ weakly sufficient sets'' as a particular case.

On the other hand, the condition $\mathbf{(C_5)}$ is not satisfied, as $\alpha(p)=p$, is unbounded. That is, the classical pair $(\Phi,\psi)$ does not satisfy the assumption in Theorem \ref{thm02} (2).
\end{rem}

\begin{rem}
Concerning condition $\mathbf{(C_6)}$, we give some motivations for its appearance.

First, a typical example that makes condition $\mathbf{(C_6)}$ non-trivial is
\begin{equation} \label{e01}
\psi(r)=-\log (1-r^2), 0<r<1.
\end{equation}
Actually, we can prove a slightly stronger assertion for such a choice of $\psi$: for any $\xi \in \B$ and $t>0$, there exists a $f_{\xi, t} \in R(\psi, \xi, t)$, such that
\[
|f_{\xi, t}(z)| \le e^{t\psi(|z|)}.
\]
Indeed, this follows from
\begin{equation} \label{e02}
f_{\xi, t}(z)=\frac{(1-|\xi|^2)^t}{(1-\langle z, \xi \rangle)^{2t}}
\end{equation}
and an easy fact
\[
1-|\Phi_{\xi}(z)|^2=\frac{(1-|\xi|^2)(1-|z|^2)}{|1- \langle \xi, z \rangle|^2}, \quad \xi, z \in \B
\]
(see, \cite[Lemma 1.2]{Zhu}), where $\Phi_\xi$ is the involutive automorphism in $\B$ associated to $\xi$.

The example above works for all $t>0$, that is why for $\psi$ chosen in \eqref{e01} we get a slightly stronger result, which is independent of the choice of $\alpha$ in $\mathbf{(C_6)}$. So is the collection $\Phi$.

This example leads us to the following.
\begin{enumerate}
\item [1.] There are many ``$\psi$" that satisfy $\mathbf{(C_6)}$. One can take, say $\psi(r)=-3\log(1-r^2)$ or $\psi(r)=-\log 2(1-r)$, $0<r<1$.
\item [2.] The classical case which is considered in Remark \ref{r30} satisfies condition $\mathbf{(C_6)}$.
\item [3.] There is an alternative way to think about $\mathbf{(C_6)}$. More precisely, for each $t>0$, we define the Banach space
$$
A_{t, \psi}:=\left\{ f \in \calO(\B): \sup_{z \in \B} |f(z)| e^{-t \psi(|z|)}<\infty\right\}.
$$
Then $\mathbf{(C_6)}$ can be restated as follows: there exists $M>0$, such that for any $\xi \in \B$ and $t \in [0, \alpha)$, there is $f_{\xi,t} \in \B_{t, \psi}(M)$, satisfying $|f_{\xi, t}(\xi)|=e^{t \psi(|\xi|)}$, where $\B_{t,\psi}(M)$ is the ball of radius $M$ in $A_{t, \psi}$.

Note also that the space $A_{t, \psi}$ can be thought as of the ``growth space" of certain space of holomorphic functions on the unit ball. For instance, in our above example, $A_{t, \xi}$ is the Bergman-type spaces $A^{-t}$, which is the ``growth space" of   the classical Bergman space $A_{n+1-t}^1$.  We refer the reader to \cite{Zhu} for detailed information about these spaces.
\end{enumerate}
\end{rem}

\medskip
\noindent\textit{Proof of Theorem \ref{thm02}}.

\subsection{Proof of (1)}

Let $S \subset \B$ be a $\psi$-sampling set for $A_\Phi^{-\infty}$. Since conditions $\mathbf{(C_1)-(C_2)}$ are satisfied, we prove that two conditions in Theorem \ref{thm01} are true.

\medskip
\textit{ (a). $S$ is a set of uniqueness}.

\medskip
Suppose $f \in A_\Phi^{-\infty}$ and $f(z)=0$ for all $z \in S$. Then since $S$ is a $\psi$-sampling set, we have
\[
T_{f, \psi}=T_{f, \psi, S}=\limsup_{|z| \to 1, z \in S} \frac{\log |f(z)|}{\psi(|z|)}=-\infty.
\]
In particular, $\displaystyle T_{f, \psi}=\limsup_{|z| \to 1} \frac{\log |f(z)|}{\psi(|z|)}<-1$ which implies that there exists a $\del>0$, such that $\log|f(z)|\le -\psi(|z|)$ for all $z$ with $1-\del<|z|<1$.

Take an arbitrary $w \in \B$. By the Maximum modulus principle, we have
\[
|f(w)| \le \sup_{|z|=r} |f(z)| \le e^{-\psi(r)},\ \text{ for all $r\in(\max\{1-\del,|w|\},1)$}.
\]
Letting $r \to 1^{-}$, since $\dlim_{r\to1}|\psi(r)|=\infty$, we get $f(w)=0$.

\medskip
\textit{(b). For any $p \in \N$, there exists an $m=m(p)$, such that $A_\varphi^{-p, S} \subset A_{\varphi}^{-m}$}.
\medskip

Let $p\in\N$. For every $f \in A_\varphi^{-p, S}$, since
\[
\sup_{z \in S} \frac{|f(z)|}{\varphi_p(|z|)}<\infty,
\]
there exists $C>0$ such that
\[
|f(z)|<C\varphi_p(|z|),\ \text{for all $z \in S$},
\]
which gives
\[
\frac{\log|f(z)|}{\psi(|z|)}<\frac{\log C}{\psi(|z|)} + \frac{\log \varphi_p(|z|)}{\psi(|z|)},\ \text{for all $z \in S$}.
\]

Since $S$ is a $\psi$-sampling set,  by condition $\mathbf{(C_3)}$, we have
\begin{eqnarray*}
T_{f, \psi}%
&=& T_{f, \psi, S} = \limsup_{|z| \to 1, z \in S} \frac{\log|f(z)|}{\psi(|z|)} \le
\displaystyle\limsup_{|z| \to 1, z \in S} \left(\frac{\log C}{\psi(|z|)} + \frac{\log \varphi_p(|z|)}{\psi(|z|)} \right)\\
&\le& \limsup_{|z| \to 1} \left(\frac{\log C}{\psi(|z|)} + \frac{\log \varphi_p(|z|)}{\psi(|z|)} \right)
= \limsup_{|z| \to 1} \frac{\log \varphi_p(|z|)}{\psi(|z|)} =  \alpha_p.
\end{eqnarray*}

Furthermore, by condition $\mathbf{(C_4)}$, since $\alpha_p < \alpha_{p+1}$, there exists some $\del_1 \in (0, 1)$, such that
\[
\frac{\log |f(z)|}{\psi(|z|)} < \frac{\alpha_p+\alpha_{p+1}}{2} < \frac{\log \varphi_{p+1}(|z|)}{\psi(|z|)},\ \text{for all $\del_1<|z|<1$}.
\]
From this it follows that
\[
\frac{|f(z)|}{\varphi_{p+1}(|z|)} < 1, \ \text{for all $\del_1<|z|<1$}.
\]

On the other hand, since $\varphi_{p+1}(w)\in(1,\infty)$, we also have
\[
\frac{|f(z)|}{\varphi_{p+1}(|z|)} \le \sup_{|z| \le \del_1} |f(z)| < \infty.
\]
So we get
\[
\sup_{z\in\B}\frac{|f(z)|}{\varphi_{p+1}(|z|)} < \infty,
\]
that is $f \in A_\varphi^{-m}$, with $m=p+1$.

\subsection{Proof of (2)}

Let $S$ be a weakly sufficient set for $A_\Phi^{-\infty}$. We show that $T_{f, \psi, S} = T_{f, \psi}$ for every $f\in A_{\Phi}^{-\infty}$.

Assume in contrary that there exists a function $f \in A_{\Phi}^{-\infty}$, such that
\[
T_{f, \psi, S} < T_{f, \psi}.
\]
Then we can take $d\in(0,1)$ sufficiently small so that $T_{f, \psi, s}<T_{f, \psi}-d$.

Furthermore, by $\mathbf{(C_4)} - \mathbf{( C_5)}$, there is some $p\in\N$ big enough, such that
\begin{equation}\label{eq300}
\alpha_p > \alpha-d\quad\text{and}\quad f \in A_\varphi^{-p}.
\end{equation}

Since $S$ is weakly sufficient for $A_\Phi^{-\infty}$, by Lemma \ref{lem01}, there exists some $m=m(p) \ge p$ and $C_p$, such that
\begin{equation}\label{eq301}
\|g\|_m \le C_p\|g\|_{p,S}, \ \text{for all $g \in A_\Phi^{-\infty}$}.
\end{equation}

We need the following result.

\begin{lem} \label{19lem01}
$T_{f, \psi} \le \alpha_p$.
\end{lem}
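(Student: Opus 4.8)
The plan is to read off the bound directly from the membership $f \in A_\varphi^{-p}$ recorded in \eqref{eq300}. By the very definition of $\|\cdot\|_p$ one has
\[
|f(z)| \le \|f\|_p\,\varphi_p(|z|), \qquad z \in \B .
\]
We may assume $f \not\equiv 0$, since otherwise $T_{f,\psi} = -\infty$ and there is nothing to prove. Taking logarithms (and allowing the value $-\infty$ at the zeros of $f$, which is harmless for a $\limsup$) gives $\log|f(z)| \le \log\|f\|_p + \log\varphi_p(|z|)$ for all $z \in \B$.

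Next I would divide by $\psi(|z|)$ and pass to the limit $|z| \to 1$. Since $\psi > 0$ and, crucially, $\psi(r) \to \infty$ as $r \to 1^-$, the term $\log\|f\|_p/\psi(|z|)$ tends to $0$ (note $\log\|f\|_p$ may be negative, which does not matter). Hence
\[
T_{f,\psi} = \limsup_{|z|\to 1}\frac{\log|f(z)|}{\psi(|z|)}
\le \limsup_{|z|\to 1}\left(\frac{\log\|f\|_p}{\psi(|z|)} + \frac{\log\varphi_p(|z|)}{\psi(|z|)}\right)
= \lim_{r\to 1}\frac{\log\varphi_p(r)}{\psi(r)} = \alpha_p,
\]
where the penultimate equality uses that $\log\|f\|_p/\psi(|z|)\to 0$ and the last one is exactly condition $\mathbf{(C_3)}$. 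This establishes $T_{f,\psi} \le \alpha_p$.

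There is essentially no serious obstacle here: the estimate is a one-line consequence of the growth bound defining $A_\varphi^{-p}$ together with $\mathbf{(C_3)}$, and the only points requiring a word of care — a possibly negative $\log\|f\|_p$ and a possibly infinite $\log|f(z)|$ on the zero set of $f$ — are both inconsequential. The purpose of the lemma in the broader argument is to bound $T_{f,\psi}$ from above by $\alpha_p$; combined with the standing assumption $T_{f,\psi,S} < T_{f,\psi} - d$ and the choice $\alpha_p > \alpha - d$ from \eqref{eq300}, this will be used to force the desired contradiction.
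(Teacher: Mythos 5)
Your proof is correct and follows essentially the same route as the paper: both deduce the pointwise bound $|f(z)|\le C\varphi_p(|z|)$ from $f\in A_\varphi^{-p}$, take logarithms, divide, and invoke $\mathbf{(C_3)}$ together with $\psi(r)\to\infty$. The only (cosmetic) difference is that you divide directly by $\psi(|z|)$, whereas the paper factors $\frac{\log|f|}{\psi}=\frac{\log|f|}{\log\varphi_p}\cdot\frac{\log\varphi_p}{\psi}$; your additive split is in fact slightly cleaner, since it avoids the paper's intermediate claim that $\log\varphi_p(r)\to\infty$.
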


\begin{proof}
Since $f \in A_\varphi^{-p}$, there exists some  $C_0=C_0(p)>1$, such that for any $z \in \B$,
\[
|f(z)| \le C_0 \varphi_p(|z|),
\]
which implies that
\[
\frac{\log |f(z)|}{\log \varphi_p(|z|)} \le \frac{\log C_0}{\log \varphi_p(|z|)}+1.
\]
Furthermore, note that by $\mathbf{(C_3)}$ and the assumption $\lim\limits_{r \to 1} \psi(r)=\infty$, we have $\dlim_{r \to 1} \log \varphi_p(r)=\infty$. Then
\[
T_{f, \psi}=\limsup_{|z| \to 1} \frac{\log|f(z)|}{\psi(|z|)}
=
\limsup_{|z| \to 1}\left\{ \frac{\log|f(z)|}{\log \varphi_p(|z|)} \cdot \frac{\log \varphi_p(|z|)}{\psi(|z|)}\right\} \le \alpha_p.
\]
The lemma is proved.
\end{proof}

Note that Lemma \ref{19lem01} gives
\[
T_{f, \psi} \le \alpha_m <\alpha,
\]
because $m \ge p$ and $(\alpha_p)$ is strictly increasing to $\alpha$.

Next we can choose $x,y>0$ small enough, so that
\begin{equation} \label{eq302}
\alpha_m+2y < x+T_{f,\psi} < \alpha-y.
\end{equation}
Indeed, first since $\alpha_m <\alpha$, there is $y>0$ for which
\[
\alpha_m + 3y < \alpha \quad (\Longleftrightarrow\ \alpha_m+2y < \alpha-y).
\]
Then since $0 \le \alpha_m - T_{f,\psi} < \alpha_m+2y - T_{f,\psi} < \alpha-y - T_{f,\psi}$, any choice of $x\in(\alpha_m+2y - T_{f,\psi},\alpha-y - T_{f,\psi})$ gives \eqref{eq302}.

\medskip
Now by the definition of $T_{f,\psi,S}$ and $T_{f,\psi}$, there exists some $A>0$, such that
\begin{equation} \label{eq303}
|f(z)| \le A e^{(T_{f,\psi}+y)\psi(|z|)}, \ \text{for all $z \in \B$},
\end{equation}
and
\begin{equation} \label{eq304}
|f(z)| \le A e^{(T_{f,\psi}-d)\psi(|z|)}, \ \text{for all $z \in S$}.
\end{equation}

Also there exists a sequence $\{z_k\} \subset \B$ with $\dlim_{k\to\infty}|z_k|=1$, such that
\begin{equation} \label{eq305}
|f(z_k)| \ge e^{(T_{f,\psi}-y)\psi(|z_k|)}, \ \text{for every $k=1, 2, \ldots$}
\end{equation}

By condition $\mathbf{(C_6)}$ with $t=x$ and $\xi \in \B$, we can have a function $g_{\xi,x} \in R(\psi,\xi,x)$, satisfying
\begin{equation} \label{eq306}
|g_{\xi,x}(z)| \le M e^{x\psi(|z|)}, \ \text{for all $z \in \B$}.
\end{equation}

Consider the function $h_{\xi,x}=f\cdot g_{\xi,x} \in\calO(\B)$. We prove the following.

\begin{lem} \label{19lem02}
$h_{\xi,x} \in A_\Phi^{-\infty}$.
\end{lem}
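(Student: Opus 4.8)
The plan is to produce a single exponential majorant for $|h_{\xi,x}|$ valid on all of $\B$, and then to convert it into membership in one of the spaces $A_\varphi^{-q}$ by means of condition $\mathbf{(C_3)}$.

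First I multiply the two global bounds already available: from \eqref{eq303} and \eqref{eq306} one gets
$$
|h_{\xi,x}(z)| = |f(z)|\,|g_{\xi,x}(z)| \le AM\, e^{(x + T_{f,\psi} + y)\psi(|z|)}, \qquad z \in \B.
$$
Set $\beta := x + T_{f,\psi} + y$. By \eqref{eq302} we have $\beta < \alpha$, and since $\lim_{q\to\infty}\alpha_q = \alpha$ we may fix an index $q \in \N$ with $\alpha_q > \beta$.

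Next I compare $e^{\beta\psi}$ with $\varphi_q$ near the boundary. Since $\log\varphi_q(r)/\psi(r) \to \alpha_q > \beta$ by $\mathbf{(C_3)}$, and $\psi(r) \to \infty$, there is some $\delta \in (0,1)$ for which $\varphi_q(r) \ge e^{\beta\psi(r)}$ whenever $1 - \delta < r < 1$; hence $|h_{\xi,x}(z)| / \varphi_q(|z|) \le AM$ for all $z$ with $1 - \delta < |z| < 1$. On the complementary compact set $\{|z| \le 1-\delta\}$ the function $h_{\xi,x}$ is holomorphic, hence bounded, while $\varphi_q > 1$ throughout; so $\sup_{|z|\le 1-\delta} |h_{\xi,x}(z)|/\varphi_q(|z|) < \infty$ as well. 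Combining the two regions gives $\|h_{\xi,x}\|_q < \infty$, that is $h_{\xi,x} \in A_\varphi^{-q} \subseteq A_\Phi^{-\infty}$.

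I do not expect a genuine obstacle here: the only step requiring attention is the passage, via $\mathbf{(C_3)}$, from the exponential estimate $e^{\beta\psi(|z|)}$ to a bona fide weight $\varphi_q(|z|)$, which is precisely why one needs $\beta < \alpha$ — that is, why $x$ and $y$ were chosen to satisfy \eqref{eq302}. Observe that this argument uses only \eqref{eq303} and \eqref{eq306}; the sharper information in \eqref{eq304}--\eqref{eq305} is not needed for the lemma and will enter only in the remaining part of the proof of Theorem \ref{thm02}(2).
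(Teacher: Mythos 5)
Your proof is correct and follows essentially the same route as the paper's: multiply the bounds \eqref{eq303} and \eqref{eq306} to get $|h_{\xi,x}(z)|\le AM e^{(T_{f,\psi}+x+y)\psi(|z|)}$, use \eqref{eq302} and $\mathbf{(C_3)}$ to dominate this exponential by a weight $\varphi_q$ near the boundary, and handle the remaining compact set by boundedness. The only cosmetic difference is that the paper inserts an intermediate exponent $\alpha_{\widetilde{p}-1}$ between $T_{f,\psi}+x+y$ and $\alpha_{\widetilde{p}}$ before comparing with $\varphi_{\widetilde{p}}$, whereas you compare $e^{\beta\psi}$ with $\varphi_q$ directly; both are valid.
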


\begin{proof}
From \eqref{eq303} and \eqref{eq306}, it follows that
\[
|h_{\xi,x}(z)| \le AM e^{(T_{f,\psi}+x+y)\psi(|z|)}, \ \text{for all $z \in \B$}.
\]
By \eqref{eq302}, we have $T_{f,\psi}+x+y<\alpha$, and hence by $\mathbf{(C_4)}$, there is $\widetilde{p}$ large enough, such that
\[
T_{f,\psi}+x+y < \alpha_{\widetilde{p}-1} < \alpha_{\widetilde{p}} < \alpha,
\]
which gives
\begin{equation}\label{eq307}
|h_{\xi, x}(z)| \le AM e^{\alpha_{\widetilde{p}-1} \psi(|z|)},\ \text{for all $z\in\B$}.
\end{equation}

Furthermore, by $\mathbf{(C_3)}$, for $0 < \eps' < \dfrac{1}{3}(\alpha_{\widetilde{p}}-\alpha_{\widetilde{p}-1})$, there exists $\del(\widetilde{p}) \in (0,1)$, such that when $\del(\widetilde{p})<|z|<1$,
\[
\alpha_{\widetilde{p}-1}- \eps' < \frac{\log \varphi_{\widetilde{p}-1}(|z|)}{\psi(|z|)} < \alpha_{\widetilde{p}-1}+\eps'
\]
and
\[
\alpha_{\widetilde{p}}- \eps' < \frac{\log \varphi_{\widetilde{p}}(|z|)}{\psi(|z|)} < \alpha_{\widetilde{p}}+\eps',
\]
or equivalently,
\[
\varphi_{\widetilde{p}-1}(|z|)e^{-\eps' \psi(|z|)} < e^{\alpha_{\widetilde{p}-1} \psi(|z|)} < \varphi_{\widetilde{p}-1}(|z|)e^{\eps' \psi(|z|)}
\]
and
\[
\varphi_{\widetilde{p}}(|z|)e^{-\eps' \psi(|z|)}  < e^{\alpha_{\widetilde{p}} \psi(|z|)} < \varphi_{\widetilde{p}}(|z|)e^{\eps' \psi(|z|)}.
\]
Hence, when $\del(\widetilde{p})<|z|<1$, we have
\begin{eqnarray*}
e^{\alpha_{\widetilde{p}-1} \psi(|z|)}%
&<& \varphi_{\widetilde{p}-1}(|z|) e^{\eps' \psi(|z|)} < e^{ \left(\alpha_{\widetilde{p}-1}+2\eps' \right) \psi(|z|)} \\
&<& e^{\left(\alpha_{\widetilde{p}}-\eps' \right) \psi(|z|)} < \varphi_{\widetilde{p}}(|z|).
\end{eqnarray*}
Thus for $\del(\widetilde{p})<|z|<1$,
\[
|h_{\xi, x}(z)| \le AM e^{\alpha_{\widetilde{p}-1} \psi(|z|)} \le AM \varphi_{\widetilde{p}}(|z|).
\]

On the other hand, since $\psi$ is continuous on $[0,\del(\widetilde{p})]$, there exists a positive constant $\widetilde{C}$ (depending only on $\widetilde{p}$ and $\alpha_{\widetilde{p}-1}$), such that
\[
\sup_{|z| \le \del(\widetilde{p})} e^{\alpha_{\widetilde{p}-1} \psi(|z|)} \le \widetilde{C}.
\]

Combining the last two inequalities, we conclude that for some $\widetilde{A}>0$, depending only on $A$, $M$, $\widetilde{p}$ and and $\alpha_{\widetilde{p}-1}$,
\begin{equation}\label{eq308}
|h_{\xi, x}(z)| \le \widetilde{C} \varphi_{\widetilde{p}}(|z|),\ \text{for all $z\in\B$}.
\end{equation}
This shows that $h_{\xi, x} \in A^{-\widetilde{p}}_\varphi$, and hence $h_{\xi, x} \in A^{-\infty}_{\Phi}$.  The lemma is proved.
\end{proof}

Now we choose and fix $\eps_0\in(0,d)$. In this case, since
\[
T_{f,\psi}-d+x < \alpha-y-d < \alpha-d < \alpha_p-\eps_0
\]
(here $p$ is given in \eqref{eq300}), by a similar way as the proof of \eqref{eq308}, due to conditions \eqref{eq304} and \eqref{eq306}, there exists $B>0$ such that for any $z \in S$,
\begin{equation} \label{eq309}
|h_{\xi,x}(z)| \le AM e^{(T_{f,\psi}-d+x)\psi(|z|)} \le AM e^{\left(\alpha_p-\eps_0\right)\psi(|z|)}  \le B\varphi_p(|z|).
\end{equation}
This shows that $f\cdot g_{\xi,x} \in A_\varphi^{-p,S}$. Here, the constant $B$ depends on $A$, $M$, $p$ and $\eps_0$.  In particular, it is independent of the choice of $\xi \in \B$.

Similarly, applying \eqref{eq301} to $h_{\xi,x}$, by \eqref{eq309} and Lemma \ref{19lem02}, we see that there exists $D>0$, such that
\begin{equation} \label{eq3010}
|h_{\xi,x}(z)| \le C_pB \varphi_m(|z|) \le D e^{(\alpha_m+\del_0)\psi(|z|)}, \ \text{for all $z \in \B$}.
\end{equation}
Here, $\del_0>0$ is sufficiently small for which
\begin{equation} \label{eq314}
\alpha_m+2y+\del_0<x+T_{f,\psi},
\end{equation}
while the constant $D$ only depends on $C_p$, $B$, $\del_0$, boundedness of $\varphi_1$ on compact subsets of $[0,1)$, and on conditions $\mathbf{(C_2)}-\mathbf{(C_3)}$.In particular, it is independent of the choice of $\xi \in \B$.

For each $k \ge 1$, we let $z=\xi=z_k$, where $(z_k)$ are taken from \eqref{eq305}. Then  since $g_{z_k,x} \in R(\psi,z_k,x)$, we have
\[
|g_{z_k,x}(z_k)|=e^{x\psi(|z_k|)}.
\]
Putting $g_{z_k,x}(z_k)$ into \eqref{eq308} and taking into account \eqref{eq314}, we have
\[
|f(z_k)| = \left|\frac{h_{z_k,x}(z_k)}{g_{z_k,x}(z_k)}\right| \le D e^{(\alpha_m-x+\eps)\psi(|z_k|)} \le D e^{(T_{f,\psi}-2y)\psi(|z_k|)},
\]
which contradicts to \eqref{eq305} if $k$ is sufficiently large.

The theorem is proved completely.

\medskip
As  a consequence of Theorem \ref{thm01} and Theorem \ref{thm02}, we have the following result.

\begin{cor}
Let $S$ be a subset of $\B$. Let further, $\Phi$ and $\psi$ satisfy the conditions $\mathbf{(C_1)} - \mathbf{(C_6)}$. If in addition, the weight $\varphi_1$ is bounded on any compact subset of $[0,1)$, then the following assertions are equivalent:
\begin{enumerate}
\item[(i)] $S$ is $\psi$-sampling for $A^{-\infty}_\Phi$ relative to $\psi$.
\item[(ii)] $S$ is weakly sufficient for $A^{-\infty}_\Phi$.
\item[(iii)] $S$ is a set of uniqueness for $A^{-\infty}_\Phi$.
\end{enumerate}
\end{cor}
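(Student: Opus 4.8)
The plan is to read the corollary off from the two main theorems of Sections~2 and~3, together with the already proven fact that every weakly sufficient set is a set of uniqueness. Since $(\Phi,\psi)$ satisfies $\mathbf{(C_1)}-\mathbf{(C_4)}$, Theorem~\ref{thm02}(1) gives $(i)\Rightarrow(ii)$; since it also satisfies $\mathbf{(C_2)}-\mathbf{(C_6)}$ and $\varphi_1$ is bounded on every compact subset of $[0,1)$, Theorem~\ref{thm02}(2) gives $(ii)\Rightarrow(i)$; and weak sufficiency implies uniqueness, so $(ii)\Rightarrow(iii)$. Thus $(i)\Leftrightarrow(ii)$ and $(ii)\Rightarrow(iii)$ are in hand with no extra work, and the entire corollary reduces to the single implication $(iii)\Rightarrow(ii)$ (equivalently, in view of $(i)\Leftrightarrow(ii)$, to $(iii)\Rightarrow(i)$).

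For $(iii)\Rightarrow(ii)$ I would appeal to Theorem~\ref{thm01}: because $\mathbf{(C_1)}$ and $\mathbf{(C_2)}$ hold, $S$ is weakly sufficient as soon as it satisfies (a) ``$S$ is a set of uniqueness'' and (b) ``for every $p$ there is $m=m(p)$ with $A_\varphi^{-p,S}\subseteq A_\varphi^{-m}$''. Condition (a) is precisely the hypothesis $(iii)$, so everything comes down to producing the set-inclusion (b) out of $(iii)$ and the ``related growth'' conditions $\mathbf{(C_3)}-\mathbf{(C_6)}$. Fix $p$ and $f\in A_\varphi^{-p,S}$. From $|f(z)|\le\|f\|_{p,S}\,\varphi_p(|z|)$ on $S$ and $\mathbf{(C_3)}$ one reads off $T_{f,\psi,S}\le\alpha_p$ at once, and since $f\in A_\Phi^{-\infty}$ lies in some $A_\varphi^{-q}$, the computation of Lemma~\ref{19lem01} shows $T_{f,\psi}\le\alpha_q<\alpha$, so $T_{f,\psi}$ is finite; the goal is to promote $T_{f,\psi,S}\le\alpha_p$ to the global bound $T_{f,\psi}\le\alpha_p$, for then $\mathbf{(C_3)}$ and $\mathbf{(C_4)}$ force $|f(z)|/\varphi_{p+1}(|z|)<1$ for $|z|$ near $1$, hence $f\in A_\varphi^{-(p+1)}$ exactly as in step (b) of the proof of Theorem~\ref{thm02}(1), and (b) holds with $m=p+1$. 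The plan for that promotion is to argue by contradiction, imitating the proof of Theorem~\ref{thm02}(2): if $T_{f,\psi}>\alpha_p$, pick $z_k\to\partial\B$ with $|f(z_k)|\ge e^{(T_{f,\psi}-y)\psi(|z_k|)}$ as in \eqref{eq305}, apply $\mathbf{(C_6)}$ with a small parameter $x>0$ and centres $\xi=z_k$ to get $g_{z_k,x}\in R(\psi,z_k,x)$ with $|g_{z_k,x}(z)|\le M e^{x\psi(|z|)}$ on $\B$, and put $h_k:=f\,g_{z_k,x}$. One checks that the $h_k$ form a normal family and lie in $A_\varphi^{-p',S}$ (for a fixed $p'$ with $\alpha_{p'}>\alpha_p+x$) with $\|h_k\|_{p',S}$ bounded uniformly in $k$ — the constant $M$ in $\mathbf{(C_6)}$ being independent of the centre, and $\psi$ being bounded on the compact parts of $S$, where boundedness of $\varphi_1$ on compacts enters. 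One then has to turn this uniform bound on $S$ into a uniform global growth bound for $h_k$; since $|g_{z_k,x}(z_k)|=e^{x\psi(|z_k|)}$, evaluating $|f(z_k)|=|h_k(z_k)|\,e^{-x\psi(|z_k|)}$ then yields an upper bound that contradicts \eqref{eq305} for $k$ large, the set-of-uniqueness hypothesis being exactly what stops the relevant normalized limit of the $h_k$ from being trivial.

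The step I expect to be the main obstacle is precisely the last one: the passage from ``$h_k$ bounded on $S$'' to ``$h_k$ bounded on $\B$'' using only that $S$ is a set of uniqueness rather than weakly sufficient. This is the very phenomenon the paper has already flagged — that a set-inclusion need not automatically be a continuous embedding — and it is here that the conditions tying the growth of $\Phi$ to that of $\psi$ must carry the whole load, above all the uniform peak-function estimate $\mathbf{(C_6)}$, which is what would allow the contradiction scheme of Theorem~\ref{thm02}(2) to be run with the uniqueness of $S$ standing in for its weak sufficiency. Everything else — the bookkeeping with the exponents $\alpha_p$, the $\limsup$ computations, the maximum-modulus and Montel arguments imported through Theorem~\ref{thm01} — is routine and parallels what has already been carried out in Sections~2 and~3.
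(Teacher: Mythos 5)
Your reductions of $(i)\Leftrightarrow(ii)$ to Theorem~\ref{thm02} and of $(ii)\Rightarrow(iii)$ to Proposition 2.2 match the paper, whose stated proof of the corollary is only the one-line remark that continuity of the weights guarantees $\mathbf{(C_2)}$. Where you go beyond the paper is in recognizing that $(iii)\Rightarrow(ii)$ is not free: Theorem~\ref{thm01} needs the set-inclusion (b) in addition to uniqueness, and you try to manufacture (b) from $(iii)$ together with $\mathbf{(C_3)}$--$\mathbf{(C_6)}$. That is exactly the right place to worry, but the step you flag as ``the main obstacle'' is a genuine gap, not a technicality, and it cannot be closed. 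The contradiction scheme of Theorem~\ref{thm02}(2) hinges on the inequality $\|g\|_m\le C_p\|g\|_{p,S}$ of \eqref{eq301}, which \emph{is} weak sufficiency; a mere set of uniqueness provides no quantitative passage from a bound on $S$ to a bound on $\B$, and $\mathbf{(C_6)}$ only controls the peak functions $g_{\xi,x}$, not the products $h_k=f\,g_{z_k,x}$ away from $S$.

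In fact $(iii)\Rightarrow(ii)$ fails under the listed hypotheses, so no argument of this shape can succeed. Take the paper's own Example 3.5(1): $\varphi_p(r)=(1-r^2)^{-2+1/p}$ and $\psi(r)=-\log(1-r^2)$, which satisfy $\mathbf{(C_1)}$--$\mathbf{(C_6)}$ with $\alpha_p=2-1/p$ and $\varphi_1$ bounded on compacts, and let $S=\{z\in\B:\ |z|\le 1/2\}$, clearly a set of uniqueness. The functions $f_k(z)=(1-z_1)^{-(2-1/k)}$ belong to $A_\varphi^{-k}\subset A_\Phi^{-\infty}$ and satisfy $\sup_k\|f_k\|_{1,S}\le 4$, yet $f_k\notin A_\varphi^{-m}$ whenever $k>m$; hence $A_\varphi^{-1,S}\not\subset A_\varphi^{-m}$ for every $m$, condition (b) of Theorem~\ref{thm01} fails, and $S$ is neither weakly sufficient nor $\psi$-sampling (indeed $T_{f_k,\psi,S}=-\infty$ while $T_{f_k,\psi}=2-1/k$). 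So the corollary needs an additional hypothesis --- e.g.\ assuming the set-inclusion (b) outright, or some condition forcing $S$ to accumulate at $\partial\B$ --- and your attempt, like the paper's, does not (and cannot) derive $(iii)\Rightarrow(ii)$ from $\mathbf{(C_1)}$--$\mathbf{(C_6)}$ alone; the remaining implications you establish are correct and coincide with the paper's route.
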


\begin{proof}
It suffices to recall that the condition $\mathbf{(C_2)}$ is guaranteed if $\Phi$ consists of continuous weights.
\end{proof}

\begin{exa}
To complete our exposition, we provide some examples of weights $(\varphi_p)$ and $\psi$ that satisfy all conditions $\mathbf{(C_1)} - \mathbf{(C_6)}$ in the main results of our paper. Clearly, there is a lot of such weights.
\begin{enumerate}
\item [(1)] $\Phi=\left(\left(1-r^2 \right)^{-2+\frac{1}{p}} \right)_{p=1}^\infty$ and $\psi(r)=-\log (1-r^2)$.
\item [(2)] $\Phi=\left(\left(1-e^{r-1} \right)^{-3+\frac{4}{p^2}} \right)_{p=1}^\infty$ and $\psi(r)=-\frac{\log (1-r)}{10}$.
\item [(3)] $\Phi=\left(\sin \left(\frac{1-r}{100} \right)^{-10+\textup{arccot}(p)} \right)_{p=1}^\infty$ and $\psi(r)=-5\log(1-r^2)$.
\end{enumerate}
\end{exa}

\bigskip

\end{document}